\newcommand{\R}{\mathbb{R}}
\newcommand{\F}{\mathbb{F}}
\newcommand{\mg}{\mathfrak{g}}
\newcommand{\mh}{\mathfrak{h}}
\newcommand{\rdi}[3]{\left[\left[#1,#2\right],#3\right]=\left[\left[#1,#3\right],#2\right]+\left[#1,\left[#2,#3\right]\right]}
\newcommand{\rbh}[4]{#1(#2(#3,#4),#4)-#2(#1(#3,#4),#4)} 
\newcommand{\lbh}[4]{#1(#3,#2(#3,#4))-#2(#3,#1(#3,#4))} 
\newtheorem{theorem}{Theorem}[section]
\newtheorem{thm}{Theorem}[section]
\newtheorem{lemma}[thm]{Lemma}
\newtheorem{corollary}[thm]{Corollary}
\newtheorem{prop}[thm]{Proposition}
\newtheorem*{thm*}{Theorem} 
\theoremstyle{definition}
\newtheorem{definition}[thm]{Definition}
\newtheorem{remark}{Remark}[section]
\newtheorem{ex}{Example}[thm]
\DeclareMathOperator{\Der}{Der}
\DeclareMathOperator{\Lie}{Lie}
\DeclareMathOperator{\id}{id}
\DeclareMathOperator{\BiDer}{BiDer}
\DeclareMathOperator{\Aut}{Aut}
\title{A Lie Bracket on the Space of (Right) Biderivations of a Lie Algebra}
\author{Alfonso Di Bartolo$^{1,2}$, Gianmarco La Rosa$^{1,3}$}
\date{
	$^1$Dipartimento di Matematica e Informatica \\
Università degli Studi di Palermo,\\ Via Archirafi 34, 90123 Palermo, Italy\\
\href{mailto:alfonso.dibartolo@unipa.it}{\texttt{$^2$alfonso.dibartolo@unipa.it}}\\
\href{mailto:gianmarco.larosa@unipa.it}{\texttt{$^3$gianmarco.larosa@unipa.it}}
}
\definecolor{math}{rgb}{0.0, 0.8, 0.6}
\begin{document}
\maketitle

\begin{abstract}
Derivations extend the concept of differentiation from functions to algebraic structures as linear operators satisfying the Leibniz rule. In Lie algebras, derivations form a Lie algebra via the commutator bracket of linear endomorphisms. Motivated by this, we study biderivations—bilinear maps capturing higher-order infinitesimal symmetries. This work focuses on right biderivations of Lie algebras, introducing Lie brackets on spaces of biderivations to explore their algebraic and geometric properties. We analyse the interplay between left and right biderivation brackets through symmetric and skew-symmetric cases, providing a coherent Lie algebra framework. Moreover, we initiate the construction of Lie groups corresponding to the Lie algebra of biderivations, linking infinitesimal and global structures. Our results offer new perspectives on higher-order derivations with potential applications in deformation theory and generalised symmetry studies.
\end{abstract}

\noindent \textbf{Keywords:} Biderivation, Lie algebras, Lie brackets, Lie groups,  Automorphism groups of Lie groups

\vspace{0.5em}

\noindent \textbf{MSC 2020:} 17B05, 17B40,  22E15, 22E60

\section{Introduction}\label{sec:introduction}

A \emph{biderivation} of an algebra $A$ is a bilinear map $B \colon A \times A \to A$ such that, in each argument, it acts as a derivation of $A$. That is, for every $y \in A$, the maps $x \mapsto B(x, y)$ and $x \mapsto B(y, x)$ are derivations of $A$. In this context, our focus will be on biderivations over Lie algebras; however, such bilinear maps can also be studied in more general settings, such as rings and algebras over a field.

The earliest known appearance of the term ``biderivation'' is in 1980, when G. Maksa conducted research into ``symmetric biadditive maps with non-negative diagonalisation'' in a different context to our own (Hilbert spaces, \cite{maksa1980remark}). This research revealed that these maps are biderivations.
Since the 1980s, there has been a marked increase in the study of biderivations on rings and algebra.
The earliest extant works that initiated the study of the biderivations above in a purely algebraic context are dated 1989, as documented in the seminal paper by J. Vukman \cite{Vukman1989}.
The definition of biderivation for a Lie algebra was given in 2011 by D.\ Wang \emph{et al.} in \cite{wangyuchen}.  A considerable number of articles appeared in the literature since then, where biderivations of Lie algebras have been studied (see \cite{changchen2019,changchenzhou2019,changchenzhou2021,Chen_Yao_Zhao_2025,chen2016,hanwangxia2016,Liu01022025, Oubba04032025,wangyu2013}, to name a few).

It is well known that the set of derivations of an algebra (in particular, of a Lie algebra) naturally forms a Lie algebra under the commutator bracket. This raises the natural question: \emph{is there a coordinate-free Lie bracket for biderivations, similarly induced by a commutator?} The main obstacle lies in the bilinearity of biderivations—since bilinear maps cannot be composed canonically, it is not possible to define a commutator between them, and hence no direct analogue of the Lie bracket arises. The first and unique attempt to address this issue in the literature appears in \cite{DIBARTOLOLAROSA2025}, where the authors adopt a matrix-based approach. Specifically, they embed the vector space of biderivations of an \(n\)-dimensional Lie algebra into the Lie algebra \(M_n(\mathbb{F})^n\) (see Proposition~3.5 in \cite{DIBARTOLOLAROSA2025}). The answer we propose in this work is partially affirmative. We introduce the notions of \emph{right} and \emph{left biderivations}, which satisfy the biderivation condition in the first or second argument, respectively. In this framework, classical biderivations are simply those bilinear maps that are simultaneously left and right biderivations. This perspective allows us to work within vector spaces where a natural Lie bracket can be defined, thus extending the structure and intuition behind derivations to a broader bilinear context.

This paper is organised as follows. \Cref{sec:preliminaries} is dedicated to the core definitions, to ensure the paper's self-contained presentation. In \Cref{sec:main_results}, we discuss the basic ideas underlying our approach, including the motivation for introducing left and right biderivations of a Lie algebra and the construction of the corresponding Lie brackets. In \Cref{sec:symm_and_skew-symm}, we present some results on symmetric and skew-symmetric biderivations that allow us to relate the left and right Lie brackets. Finally, the last section (\Cref{sec:integration_of_rightbider}) is devoted to the integration of a particular class of right biderivations. This represents a first step towards understanding the geometric nature of the Lie algebra of right biderivations and towards integrating it, that is, identifying the Lie group whose Lie algebra is the one constructed in \Cref{sec:main_results}.

\section{Preliminary Definitions}
\label{sec:preliminaries}

For the remainder of the article, $\mg$ will denote a Lie algebra over an algebraically closed field $\F$, and the square brackets $[-,-]$ will denote the Lie bracket of $\mg$. A \emph{derivation} of $\mg$ is a linear map \( D \colon \mg \to \mg \) satisfying the Leibniz rule:
\[
D\left([x, y]\right) = [D(x), y] + [x, D(y)]
\quad \text{for all } x, y \in \mg.
\]
The set of all derivations of \( \mg \) is denoted by \( \Der(\mg) \). The set \( \Der(\mg) \) is a subset of \( \mathfrak{gl}(\mg) \) and it forms a Lie subalgebra of \( \mathfrak{gl}(\mg) \), where the usual commutator of linear maps gives the Lie bracket:
\[
[D_1, D_2] := D_1 \circ D_2 - D_2 \circ D_1.
\]
The notion of \emph{biderivations} can be viewed as a two-dimensional generalisation of derivations and has appeared in various, possibly more general, contexts. In the specific case of Lie algebras, it was introduced in \cite{wangyuchen}. \begin{definition}
	Let $\mg$ be a Lie algebra over a field $\F$. A bilinear map $B\colon \mg\times \mg\rightarrow \mg$ is called \emph{biderivation} of $L$ if it satisfies 
	\begin{align}
		& B(\left[x,y\right],z)=\left[x,B(y,z)\right]+\left[B(x,z),y\right]
		\label{eq:cond1}\\
		& B(x,\left[y,z\right])=\left[B(x,y),z\right]+\left[y,B(x,z)\right]
		\label{eq:cond2},
	\end{align}
	for all $x,y,z\in \mg$.
\end{definition}

A \emph{left Leibniz algebra $L$ over a field $\F$} is a vector space over $\F$, equipped with a $\F$-bilinear map $\left[-,-\right]\colon L\times L\rightarrow L$ satisfying the Leibniz identity
\begin{equation}\label{eq:left_diff_id}
	\left[x,\left[y,z\right]\right]=\left[\left[x,y\right],z\right]+\left[y,\left[x,z\right]\right],
\end{equation}
for all $x,y,z\in L$. 
$L$ is said to be a \emph{right Leibniz algebra over $\F$} if
\begin{equation}\label{eq:right_diff_id}
	\left[\left[x,y\right],z\right]=\left[\left[x,z\right],y\right]+\left[x,\left[y,z\right]\right],
\end{equation}
for all $x,y,z\in L$. 

According to the original definition introduced by A.~Blokh in 1965~\cite{blokh1965}, the identity~\Cref{eq:left_diff_id} is referred to as the \emph{left differential identity}, while~\eqref{eq:right_diff_id} is known as the \emph{right differential identity}.
A Leibniz algebra that is both left and right is called \emph{symmetric}.
It is worth noting that results which are applicable to left Leibniz algebras also hold for right Leibniz algebras, given an appropriate reformulation. Given a left Leibniz algebra $\left(L,\left[\cdot,\cdot\right]\right)$, it is easy to define a new product on $L$, namely $\left\{\cdot,\cdot\right\}$, on the same vector space defined by $\left\{x,y\right\}=\left[y,x\right]$ (\emph{opposite product}). In this way $\left(L,\left\{\cdot,\cdot\right\}\right)$ is a right Leibniz algebra. For further details and a more in-depth treatment of the subject, the reader is referred to the textbook \cite{ayupov2019}.

\section{Main Results}\label{sec:main_results}

In this section, the core contributions of the study are presented. The initial step in this process is to motivate the distinction between left and right biderivations, to highlight the algebraic reasons for this separation. Subsequently, the definition of a Lie bracket structure on the space of right biderivations is proceeded with, thus establishing the foundational algebraic framework for the subsequent analysis.

\subsection{The Need to Distinguish Between Left and Right Biderivations}

Inspired by the theory of Leibniz algebra—which naturally exhibits a left-right asymmetry due to the lack of skew-symmetry of their bracket—we propose to adopt a similar perspective for biderivations. Indeed, it appears to be overly restrictive to require a bilinear map on an algebra $A$ to behave as a biderivation in both variables simultaneously. This approach is motivated by two factors. First, it establishes an elegant analogy with the structure of Leibniz algebras. More precisely, the defining equations for a biderivation (see \Cref{eq:cond1,eq:cond2}) closely resemble---if not coincide with---the identities that characterise symmetric Leibniz algebras (see \Cref{eq:left_diff_id,eq:right_diff_id}). Then, since Leibniz algebras are not generally symmetric---and may be left or right but not both---this asymmetry is naturally reflected in the definition of biderivations.  
This leads us to the second motivation, which arises from algebraic necessities that will become apparent later---for instance, the need to define a Lie bracket on the space of biderivations. In light of these considerations, this study proposes the introduction of the notions of left and right biderivations.

\begin{definition}\label{def:rightbiderivation}
    A map $B\colon \mg\times \mg\to \mg$ is a \emph{right biderivation} of $\mg$ if, for any $x_1,x_2,y,z\in L$ and $\lambda_1,\lambda_2\in\F$, the following hold:
    \begin{align}
    &B(\lambda_1x_1+\lambda_2x_2,y)=\lambda_1B(x_1,y)+\lambda_2B(x_2,y),\label{eq:1st_arg_linearity} \\
    &B(\left[x,y\right],z)=\left[x,B(y,z)\right]+\left[B(x,z),y\right].\label{eq:rightbider}
    \end{align}
\end{definition}

\begin{definition}\label{def:leftbiderivation}
    A map $B\colon \mg\times \mg\to \mg$ is a \emph{left biderivation} of $\mg$ if,  for any $x,y_1,y_2,z\in\mg$ and $\lambda_1,\lambda_2\in\F$, the following hold:
    \begin{align}
    &B(x,\lambda_1y_1+\lambda_2y_2)=\lambda_1B(x,y_1)+\lambda_2B(x,y_2),\label{eq:2nd_arg_linearity} \\
    &B(x,\left[y,z\right])=\left[B(x,y),z\right]+\left[y,B(x,z)\right].\label{eq:leftbider}
    \end{align}
\end{definition}

To express this differently, the \Cref{eq:rightbider} (resp. \Cref{eq:leftbider}) indicates that a right (left) biderivation is a map defined on the Cartesian product of $\mg$ that gives an element of $\mg$; this map is linear and behaves like a derivation for the first (second) argument. This allows us to give the following alternative definitions, similar to what happens for biderivations.
After introducing these definitions, it is crucial to underscore that the terminology is not an arbitrary choice. The term ``right biderivation'' is employed to denote the mapping from a fixed element to a right Leibniz algebra structure (and analogously for left biderivations).

\begin{definition}\label{def:alt_def_of_rightbiderivation}
        A map $B\colon \mg\times \mg\to \mg$ is a \emph{right biderivation} of $\mg$ if, for any $y\in \mg$, $B(\cdot,y)\colon \mg\to \mg$ that maps $x\mapsto B(x,y)$ is a derivation of $\mg$.
\end{definition}

\begin{definition}\label{def:alt_def_of_leftbiderivation}
        A map $B\colon \mg\times \mg\to \mg$ is a \emph{left biderivation} of $\mg$ if, for any $x\in \mg$, $B(x,\cdot)\colon \mg\to \mg$ that maps $y\mapsto B(x,y)$ is a derivation of $\mg$.
\end{definition}

We denote with $\BiDer_l(\mg)$ and $\BiDer_r(\mg)$ the vector spaces of left and right biderivations of $\mg$, respectively.
Proposition 3.1 in \cite{DIBARTOLOLAROSA2025} states that $\BiDer(\mg)$ is an $\mathbb{F}$-vector space. It is therefore natural to ask whether $\BiDer_l(\mg)$ and $\BiDer_r(\mg)$ are also vector spaces over $\mathbb{F}$. The proof in \cite{DIBARTOLOLAROSA2025} relies on the fact that $\BiDer(\mg)$ embeds into the vector space of bilinear maps from $\mg \times \mg$ to $\mg$, denoted by $B(\mg, \mg; \mg)$, and shows that $\BiDer(\mg)$ is a subspace of $B(\mg, \mg; \mg)$. 

In our case, since we do not yet have a vector space into which $\BiDer_l(\mg)$ or $\BiDer_r(\mg)$ naturally embed, we must instead prove directly that they are $\mathbb{F}$-vector spaces, using the classical definition.  To avoid unnecessary repetition, we will only refer to right biderivations, bearing in mind that all statements also apply to left biderivations with straightforward and obvious modifications. Now, let us begin by defining addition and scalar multiplication. Let $B_1,B_2\in\BiDer_r(\mg)$, so we define $B_1+B_2$ as 
\begin{equation}\label{eq:sum_of_right_bider}
    \left(B_1+B_2\right)(x,y)=B_1(x,y)+B_2(x,y),\quad\forall x,y\in \mg,
\end{equation}
and, for any $\lambda\in\F$ and $B\in\BiDer_r(\mg)$, we define $\lambda\cdot B$ as
\begin{equation}\label{eq:multbyscal_of_right_bider}
    \left(\lambda\cdot B\right)(x,y)=\lambda B(x,y),\quad\forall x,y\in \mg.
\end{equation}

The proof of these results consists of straightforward and routine computations. For this reason, we omit it and leave the details to the reader.

\begin{thm}\label{thm:right_bdier_vectorspace}
    The set $\BiDer_r(\mg)$, equipped with the addition defined in~\Cref{eq:sum_of_right_bider} and scalar multiplication defined in~\Cref{eq:multbyscal_of_right_bider}, is a vector space over $\mathbb{F}$.
\end{thm}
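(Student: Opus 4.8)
The plan is to avoid checking all eight vector-space axioms from scratch by realising $\BiDer_r(\mg)$ as a linear subspace of a manifestly larger vector space. Let $\mathcal{M}$ denote the set of all maps $\mg\times\mg\to\mg$; under the pointwise operations of~\Cref{eq:sum_of_right_bider,eq:multbyscal_of_right_bider}, $\mathcal{M}$ is an $\F$-vector space, all of whose axioms are inherited argument-by-argument from those of $\mg$ regarded as an $\F$-vector space (the standard ``functions valued in a vector space'' construction). Since $\BiDer_r(\mg)\subseteq\mathcal{M}$, it then suffices to verify the subspace criterion: (i) $\BiDer_r(\mg)$ is non-empty; (ii) it is closed under addition; (iii) it is closed under scalar multiplication.

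For (i), the zero map $(x,y)\mapsto 0$ trivially satisfies both~\Cref{eq:1st_arg_linearity} and~\Cref{eq:rightbider}, hence lies in $\BiDer_r(\mg)$. For (ii) and (iii), the key observation is that \emph{both} defining conditions of \Cref{def:rightbiderivation} are linear in $B$. Linearity in the first argument (\Cref{eq:1st_arg_linearity}) is obviously preserved by $\F$-linear combinations. For~\Cref{eq:rightbider}, given $B_1,B_2\in\BiDer_r(\mg)$ and $\lambda_1,\lambda_2\in\F$, one expands $(\lambda_1 B_1+\lambda_2 B_2)([x,y],z)$ via~\Cref{eq:sum_of_right_bider,eq:multbyscal_of_right_bider}, applies~\Cref{eq:rightbider} to $B_1$ and $B_2$ separately, and re-collects the resulting terms using the $\F$-bilinearity of the Lie bracket $[-,-]$ of $\mg$ — it is precisely bilinearity of $[-,-]$ that lets the scalars $\lambda_i$ pass in and out of the bracket. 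The result is exactly $[x,(\lambda_1 B_1+\lambda_2 B_2)(y,z)]+[(\lambda_1 B_1+\lambda_2 B_2)(x,z),y]$, so $\lambda_1 B_1+\lambda_2 B_2\in\BiDer_r(\mg)$, giving both (ii) and (iii) at once.

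I do not anticipate a genuine obstacle: once the subspace viewpoint is taken, everything reduces to the remark that the right-biderivation identity is stable under $\F$-linear combinations, which is forced by the bilinearity of the ambient bracket. The only point demanding a little care is the bookkeeping in~\Cref{eq:rightbider} — ensuring each occurrence of the combined map is expanded consistently and that bilinearity of $[-,-]$ is invoked on the correct slot — which is exactly the ``straightforward and routine'' computation the authors decline to reproduce. If one instead prefers a fully self-contained argument with no ambient space, the eight vector-space axioms can be checked directly; they all follow pointwise from the corresponding identities in $\mg$, and the only content beyond bare axioms is again the closure of $\BiDer_r(\mg)$ under the two operations, handled as above. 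The same reasoning applies verbatim, \emph{mutatis mutandis}, to $\BiDer_l(\mg)$ using \Cref{eq:2nd_arg_linearity,eq:leftbider}.
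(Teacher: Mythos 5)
Your proposal is correct, and the closure computation at its heart — expand $(\lambda_1B_1+\lambda_2B_2)([x,y],z)$, apply \Cref{eq:rightbider} to each $B_i$, and regroup using bilinearity of $[-,-]$ — is exactly the ``straightforward and routine'' verification the paper declines to write out. Where you genuinely diverge is in the framing: the authors explicitly state that, unlike the case of $\BiDer(\mg)$ (which sits inside the space $B(\mg,\mg;\mg)$ of bilinear maps), they ``do not yet have a vector space into which $\BiDer_r(\mg)$ naturally embeds'' and therefore must verify the vector-space axioms directly from the classical definition. You observe, correctly, that no such obstacle exists: the set of \emph{all} maps $\mg\times\mg\to\mg$ with pointwise operations is already an $\F$-vector space (bilinearity of the elements is irrelevant for this), so the subspace criterion applies and the eight axioms come for free. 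This is a cleaner and slightly stronger observation than the paper's own framing — one could even take as ambient space the maps linear in the first argument only — and it buys you a shorter argument in which the sole nontrivial content is the closure of $\BiDer_r(\mg)$ under linear combinations, which you handle correctly; the paper's announced direct route proves the same thing at the cost of re-verifying axioms that are inherited pointwise from $\mg$ anyway.
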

    
     Clearly, we have 
\begin{equation*}
\BiDer(\mg)=\BiDer_l(\mg)\cap\BiDer_r(\mg).
\end{equation*}

Hence, applying \Cref{thm:right_bdier_vectorspace} to both right and left biderivations, Proposition 3.1 in \cite{DIBARTOLOLAROSA2025} follows as a corollary.

\begin{corollary}[Proposition 3.1, \cite{DIBARTOLOLAROSA2025}]\label{cor:Bider_is_vector_space}
    $\BiDer(\mg)$ is a vector space over $\F$.
\end{corollary}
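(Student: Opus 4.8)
The plan is to derive this directly from \Cref{thm:right_bdier_vectorspace} together with the (noted) set-theoretic identity $\BiDer(\mg)=\BiDer_l(\mg)\cap\BiDer_r(\mg)$. First I would record that \Cref{thm:right_bdier_vectorspace}, applied both as stated and in its mirror-image form for left biderivations, shows that $\BiDer_r(\mg)$ and $\BiDer_l(\mg)$ are each $\F$-vector spaces under the pointwise addition and scalar multiplication of \Cref{eq:sum_of_right_bider,eq:multbyscal_of_right_bider}. The decisive observation is that these operations are not ad hoc: they are precisely the restrictions of the pointwise operations on the (evidently $\F$-linear) space $\mathrm{Map}(\mg\times\mg,\mg)$ of all maps $\mg\times\mg\to\mg$. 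Hence $\BiDer_r(\mg)$ and $\BiDer_l(\mg)$ are not merely abstract vector spaces but genuine $\F$-subspaces of the common ambient space $\mathrm{Map}(\mg\times\mg,\mg)$.

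Next I would invoke the elementary fact that the intersection of two subspaces of a vector space is again a subspace; applying this to $\BiDer_l(\mg)\cap\BiDer_r(\mg)$ inside $\mathrm{Map}(\mg\times\mg,\mg)$ yields that $\BiDer(\mg)$ is an $\F$-subspace, and in particular an $\F$-vector space. If one prefers to avoid even citing that fact, the alternative is a one-line closure check: given $B_1,B_2\in\BiDer(\mg)$ and $\lambda\in\F$, each of $B_1,B_2$ lies in both $\BiDer_l(\mg)$ and $\BiDer_r(\mg)$, so by the closure already furnished by \Cref{thm:right_bdier_vectorspace} (and its left analogue) the maps $B_1+B_2$ and $\lambda\cdot B_1$ again lie in both, hence in the intersection; together with the fact that the zero bilinear map is a biderivation, this exhibits $\BiDer(\mg)$ as a subspace.

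There is essentially no obstacle here: the only point requiring a moment's care is the compatibility of operations — i.e. confirming that the vector-space structures placed on $\BiDer_r(\mg)$ and $\BiDer_l(\mg)$ by \Cref{thm:right_bdier_vectorspace} are the \emph{same} structure when one passes to their intersection, which is immediate since both are inherited pointwise from $\mathrm{Map}(\mg\times\mg,\mg)$. I would therefore keep the write-up to two or three lines, phrasing it exactly as ``$\BiDer(\mg)$ is the intersection of the two subspaces $\BiDer_l(\mg)$ and $\BiDer_r(\mg)$ of $\mathrm{Map}(\mg\times\mg,\mg)$, hence a subspace,'' which is the natural corollary of the preceding theorem.
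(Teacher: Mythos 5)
Your proposal is correct and follows essentially the same route as the paper, which simply observes that $\BiDer(\mg)=\BiDer_l(\mg)\cap\BiDer_r(\mg)$ and applies \Cref{thm:right_bdier_vectorspace} (and its left-handed analogue) to conclude. Your added remark that both structures are restrictions of the pointwise operations on the space of all maps $\mg\times\mg\to\mg$ is a small but worthwhile clarification of why the intersection argument is legitimate.
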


Now we would to explain better why we introduced these two definitions. The following examples will clarify our motivations. We begin by recalling a classification result concerning low-dimensional Leibniz algebras (cf. Section 3.1.2.1 in \cite{ayupov2019}).

\begin{thm}
	Let $L$ be a left Leibniz algebra over a field $\mathbb{F}$. If $\dim_\mathbb{F} L=2$, with $L=\langle e_1, e_2\rangle$, then $L$ is isomorphic to one of the following algebras:
	\begin{align*}
		L_1 & \colon\text{ abelian algebra}                          \\
		L_2 & \colon  \left[e_1,e_2\right]=e_2,\text{ Lie algebra}   \\
		L_3 & \colon  \left[e_2,e_2\right]=e_1                       \\
		L_4 & \colon   \left[e_2,e_1\right]=\left[e_2,e_2\right]=e_1 
	\end{align*}
\end{thm}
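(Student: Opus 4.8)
The plan is to combine a little structure theory of Leibniz algebras with a direct analysis of structure constants in dimension two. First I would record two elementary consequences of the left differential identity \eqref{eq:left_diff_id} for a left Leibniz algebra $L$: substituting $y=x$ gives $[[x,x],z]=0$ for all $x,z$, so the Leibniz kernel $\Leib(L):=\mathrm{span}_{\mathbb F}\{[x,x]:x\in L\}$ is annihilated on the right; polarising the square shows $[u,v]+[v,u]\in\Leib(L)$; and applying \eqref{eq:left_diff_id} to the triple $(z,x,x)$ in place of $(x,y,z)$ gives $[z,[x,x]]=[[z,x],x]+[x,[z,x]]\in\Leib(L)$. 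Hence $\Leib(L)$ is a two-sided ideal, the quotient $L/\Leib(L)$ is skew-symmetric and therefore a Lie algebra, and $\Leib(L)$, being right-annihilated, is in particular abelian. Consequently, if $L$ is not abelian then $\Leib(L)\neq L$, so $\dim_{\mathbb F}\Leib(L)\in\{0,1\}$.

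Next I would run the case split. If $L$ is abelian we are in case $L_1$. If $L$ is non-abelian with $\Leib(L)=0$, then $L$ is a genuine Lie algebra, and the standard normalisation for a non-abelian $2$-dimensional Lie algebra (take $e_2$ spanning the $1$-dimensional derived algebra $[L,L]$, complete to a basis by some $e_1'$, note $[e_1',e_2]=\mu e_2$ with $\mu\neq 0$, and rescale $e_1:=e_1'/\mu$) yields $[e_1,e_2]=e_2$, i.e.\ $L_2$. The last case is $\dim_{\mathbb F}\Leib(L)=1$; write $\Leib(L)=\langle e_1\rangle$. Then $L/\langle e_1\rangle$ is the $1$-dimensional abelian Lie algebra, so $[L,L]\subseteq\langle e_1\rangle$, meaning every bracket lands in $\langle e_1\rangle$, while right-annihilation gives $[e_1,e_1]=[e_1,e_2]=0$. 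Thus the whole bracket is encoded by $[e_2,e_1]=c\,e_1$ and $[e_2,e_2]=d\,e_1$ with $(c,d)\neq(0,0)$, and a short check confirms that \eqref{eq:left_diff_id} is satisfied automatically for every such pair.

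It remains to normalise this two-parameter family. If $c=0$ (hence $d\neq 0$), rescaling $e_1$ gives $[e_2,e_2]=e_1$ with all other brackets zero, which is $L_3$. If $c\neq 0$, I would first replace $e_2$ by $e_2+\mu e_1$ with $\mu=(c-d)/c$ to arrange $[e_2,e_2]=c\,e_1$ as well, and then rescale $e_2$ and $e_1$ to reach $[e_2,e_1]=[e_2,e_2]=e_1$, which is $L_4$. Finally, to see that the four algebras really constitute a classification (are pairwise non-isomorphic) I would separate them by the invariants $\dim[L,L]$, $\dim\Leib(L)$, and whether the generator of $\Leib(L)$ is central: $L_1$ is abelian; $L_2$ is Lie with $\dim[L,L]=1$; and $L_3,L_4$ are non-Lie with $\dim[L,L]=1$, distinguished because $e_1$ is central in $L_3$ whereas $[e_2,e_1]=e_1\neq 0$ in $L_4$.

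\textbf{Main obstacle.} The conceptual content is slim; the real labour is bookkeeping. The two delicate points are: (i) checking the structural facts about $\Leib(L)$ carefully enough to remain valid over an arbitrary field, including characteristic $2$ (so that the theorem holds as stated over $\mathbb F$); and (ii) organising the change-of-basis normalisations in the last step without circular rescalings and verifying that the case split is exhaustive—in particular that the family with $(c,d)\neq(0,0)$ collapses to exactly the two orbits $L_3$ and $L_4$. A more pedestrian alternative bypasses the structure theory entirely: write all eight structure constants, impose \eqref{eq:left_diff_id} on basis triples to obtain polynomial relations, and solve them up to change of basis; there the only obstacle is the sheer size of the resulting case analysis.
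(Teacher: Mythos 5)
Your proposal is correct, but note that the paper does not prove this statement at all: it is recalled verbatim as a known classification, with a citation to Section 3.1.2.1 of the Ayupov--Omirov--Rakhimov textbook, so there is no in-paper argument to compare against. Your route through the Leibniz kernel $\Leib(L)=\mathrm{span}\{[x,x]\}$ is the standard and efficient one, and the key facts you list all check out: $[[x,x],z]=0$ forces $[\Leib(L),L]=0$ (a small terminological quibble: this makes $\Leib(L)$ a \emph{left} annihilator, not ``annihilated on the right,'' though the equations you actually use, $[e_1,e_1]=[e_1,e_2]=0$, are the right ones); polarisation and the identity applied to $(z,x,x)$ make $\Leib(L)$ a two-sided ideal with Lie quotient; and the dichotomy $\dim\Leib(L)\in\{0,1\}$ for non-abelian $L$ reduces everything to the two-parameter family $[e_2,e_1]=ce_1$, $[e_2,e_2]=de_1$. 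I verified that every such pair $(c,d)$ satisfies the left differential identity (both sides reduce to multiples of $[\,\cdot\,,e_1]$, and the only nonzero contributions match), that your substitution $e_2\mapsto e_2+\frac{c-d}{c}e_1$ followed by the rescaling $e_2\mapsto c^{-1}e_2$, $e_1\mapsto c^{-1}e_1$ lands exactly on $L_4$, and that your separating invariants (abelian; Lie; centrality of the generator of $\Leib(L)$) do distinguish the four classes over any field, including characteristic $2$. The only things to spell out in a full write-up are the exhaustiveness remarks you already flag: that $\Leib(L)=0$ together with the left identity really does yield a Lie algebra, and that the $2$-dimensional non-abelian Lie algebra is unique over an arbitrary (not necessarily algebraically closed) field, which it is since the derived algebra is one-dimensional and one only ever divides by a nonzero scalar.
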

	
\begin{ex}
	The Leibniz algebra $L_4$ is only left, unlike of the symmetric Leibniz algebra $L_3$. Indeed, by direct computation of right differential identity for the triple $\left\{e_2,e_2,e_2\right\}$, we have
	\begin{gather*}
		\rdi{e_2}{e_2}{e_2} \\
		\left[e_2,e_1\right]=e_1\neq0 
	\end{gather*}
\end{ex}
	
Therefore, there are left Leibniz algebras that are not right Leibniz algebras. Similarly, if we consider Leibniz algebras as non-associative algebras with a biderivation, and since not all Leibniz algebras are symmetric, it is appropriate to introduce the definitions of left and right biderivations.

\subsection{Defining a Lie Bracket on Right Biderivations}\label{subsec:Lie_brackets}

Now we are ready to define an operation over $\BiDer_r(\mg)$ and try to investigate some of its properties. Let $B_1$ and $B_2$ be two right biderivations of $\mg$. We denote with $B_1\rhd B_2$ the bilinear map defined as follows
\begin{equation}\label{eq:Liebracket_right}
    B_1\rhd B_2(x,y)=B_1(B_2(x,y),y)-B_2(B_1(x,y),y),
\end{equation}
for any $x,y\in \mg$.

Before proceeding, we find it necessary to explain the origin of the bracket under consideration (even though we have not yet proven that it defines a bracket). The inspiration comes from a simple observation: a bilinear map $B$ is a biderivation if and only if, for every $x \in \mg$, the maps $B(\cdot, x)$ and $B(x, \cdot)$ are derivations.

Let us fix an element $y \in \mg$ and consider two biderivations $B_1$ and $B_2$. Define the corresponding derivations $D_1 = B_1(\cdot, y)$ and $D_2 = B_2(\cdot, y)$. The Lie bracket of $D_1$ and $D_2$, given by the commutator, is again a derivation:
\[
[D_1, D_2] = D_1 D_2 - D_2 D_1,
\]
that is,
\begin{align*}
[D_1, D_2](x) &= D_1 \circ D_2(x) - D_2 \circ D_1(x) \\
              &= D_1(B_2(x, y)) - D_2(B_1(x, y)) \\
              &= B_1(B_2(x, y), y) - B_2(B_1(x, y), y),
\end{align*}
for all $x \in \mg$.

In general, $B_1\rhd B_2$ is not a biderivation.
	
\begin{prop}\label{prop:algclosure_rhd}
Let $B_1$ and $B_2$ be two right biderivations of $\mg$. The operation defined by \Cref{eq:Liebracket_right} is closed.
\end{prop}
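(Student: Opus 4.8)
The plan is to avoid a head‑on verification of the defining identity \eqref{eq:rightbider} for $B_1\rhd B_2$ and instead reduce everything to the equivalent formulation of \Cref{def:alt_def_of_rightbiderivation}: it is enough to show that, for each fixed $z\in\mg$, the linear endomorphism $x\mapsto (B_1\rhd B_2)(x,z)$ is a derivation of $\mg$. This is the natural route because the bracket $\rhd$ was designed precisely so that the second argument behaves as a frozen parameter.

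Concretely, I would fix $z\in\mg$ and set $D_1:=B_1(\cdot,z)$ and $D_2:=B_2(\cdot,z)$, viewed as elements of $\mathfrak{gl}(\mg)$. Since $B_1$ and $B_2$ are right biderivations, \Cref{def:alt_def_of_rightbiderivation} gives $D_1,D_2\in\Der(\mg)$. The one substantive observation — already anticipated in the discussion preceding the statement — is that in \Cref{eq:Liebracket_right} the second slot is occupied by the \emph{same} element $z$ in every occurrence, so that
\[
(B_1\rhd B_2)(x,z)=B_1\bigl(B_2(x,z),z\bigr)-B_2\bigl(B_1(x,z),z\bigr)=D_1\bigl(D_2(x)\bigr)-D_2\bigl(D_1(x)\bigr)=[D_1,D_2](x)
\]
for all $x\in\mg$; that is, $(B_1\rhd B_2)(\cdot,z)=[D_1,D_2]$ as linear maps.

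To finish, I would invoke the classical fact recalled in \Cref{sec:preliminaries} that $\Der(\mg)$ is a Lie subalgebra of $\mathfrak{gl}(\mg)$ under the commutator, whence $[D_1,D_2]\in\Der(\mg)$. As $z$ was arbitrary, every map $(B_1\rhd B_2)(\cdot,z)$ is a derivation of $\mg$, so $B_1\rhd B_2\in\BiDer_r(\mg)$ by \Cref{def:alt_def_of_rightbiderivation}, which is exactly the claimed closure; linearity of $B_1\rhd B_2$ in its first argument is automatic, since it is built from the linear maps $D_i$.

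I do not expect a real obstacle here. The only point deserving a moment's care is recognising that the ``freezing'' of the second argument is what makes the two composite terms assemble into the commutator $[D_1,D_2]$: were the inner and outer second arguments in \Cref{eq:Liebracket_right} allowed to differ, this identification would fail. This is also, implicitly, the structural reason why one restricts to right biderivations in order for $\rhd$ to be well behaved, and it is why the analogous statement for left biderivations (with the roles of the two arguments swapped) should be proved by the same argument applied to the maps $B_i(x,\cdot)$.
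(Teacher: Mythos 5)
Your proof is correct, and it takes a genuinely different --- and more conceptual --- route than the paper's. The paper proves closure head-on: it first checks linearity of $B_1\rhd B_2$ in the first slot, then expands $(B_1\rhd B_2)([x,y],z)$ using \Cref{eq:rightbider} for $B_1$ and $B_2$, watches the cross terms $[B_1(x,z),B_2(y,z)]$ and $[B_2(x,z),B_1(y,z)]$ cancel in pairs, and matches the survivors against $[x,B_1\rhd B_2(y,z)]+[B_1\rhd B_2(x,z),y]$. You instead observe that, with the second argument frozen at $z$, the map $(B_1\rhd B_2)(\cdot,z)$ is literally the commutator $[D_1,D_2]$ of the derivations $D_i=B_i(\cdot,z)$, and then invoke closure of $\Der(\mg)$ under the commutator; linearity in the first argument comes for free. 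This is exactly the heuristic the authors state immediately after \Cref{eq:Liebracket_right} as the origin of the bracket, but which they do not exploit in the proof itself. Your version buys brevity and makes the cancellation of cross terms structural rather than accidental; its only dependency is the equivalence of \Cref{def:rightbiderivation} and \Cref{def:alt_def_of_rightbiderivation}, which the paper asserts without proof but which is an immediate unfolding of the definition of a derivation, so there is no gap. The paper's explicit computation has the mild advantage of being self-contained and of displaying the term-by-term bookkeeping that reappears in the later verification of the Jacobi identity.
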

	
\begin{proof}
	We have to prove that $B_1\rhd B_2\in\BiDer_r(\mg)$, for any $B_1,B_2\in\BiDer_r(\mg)$. Firstly, we have to show that $B_1\rhd B_2$ is a linear function in its first argument. Let $x_1,x_2,y\in \mg$ and $\lambda_1,\lambda_2\in\F$. Then by \Cref{eq:Liebracket_right} we have
    \begin{equation*}
        B_1\rhd B_2(\lambda_1x_1+\lambda_2x_2,y)=B_1(B_2(\lambda_1 x_1+\lambda_2 x_2,y),y)-B_2(B_1(\lambda_1 x_1+\lambda_2 x_2,y),y),
    \end{equation*}
    and by the linearity on the first argument of $B_1$ and $B_2$ we obtain
    \begin{align*}
        B_1\rhd B_2(\lambda_1x_1+\lambda_2x_2,y)&=B_1(\lambda_1B_2(x_1,y)+\lambda_2B_2(x_2,y),y)\\
        &\quad-B_2(\lambda_1B_1(x_1,y)+\lambda_2B_1(x_2,y),y)\\
        &=\lambda_1B_1(B_2(x_1,y),y)+\lambda_2B_1(B_2(x_2,y),y)\\
        &\quad-\lambda_1B_2(B_1(x_1,y),y)-\lambda_2B_2(B_1(x_2,y),y)\\
        &=\lambda_1(B_1\rhd B_2(x_1,y))+\lambda_2(B_1\rhd B_2(x_2,y)).
    \end{align*}
    Now, for any $x,y,z\in \mg$, indeed we have
	\begin{align*}
		B_1\rhd B_2(\left[x,y\right],z) & =\rbh{B_1}{B_2}{\left[x,y\right]}{z}                                                                                               \\
		                                & =B_1(\left[x,B_2(y,z)\right],z)+B_1(\left[B_2(x,z),y\right],z)\\
                                  & \quad-B_2(\left[x,B_1(y,z)\right],z)-B_2(\left[B_1(x,z),y\right],z)       \\
		                                & =\left[x,B_1(B_2(y,z),z)\right]+\left[B_1(x,z),B_2(y,z)\right]\\
                                  &\quad+\left[B_2(x,z),B_1(y,z)\right]+\left[B_1(B_2(x,z),z),y\right]       \\
		                                & \quad-\left[x,B_2(B_1(y,z),z)\right]-\left[B_2(x,z),B_1(y,z)\right]\\
                                  &\quad-\left[B_2(B_1(x,z),z),y\right]-\left[B_1(x,z), B_2(y,z)\right] \\
                                  &=\left[x,B_1(B_2(y,z),z)\right]
                                  +\left[B_1(B_2(x,z),z),y\right]       \\
		                                & \quad-\left[x,B_2(B_1(y,z),z)\right]-\left[B_2(B_1(x,z),z),y\right]
	\end{align*}
	and
	\begin{align*}
		\left[x,B_1\rhd B_2(y,z)\right]+\left[B_1\rhd B_2(x,z),y\right] & =\left[x,B_1(B_2(y,z),z)\right]-\left[x,B_2(B_1(y,z),z)\right]  \\
		                                                                &\quad +\left[B_1(B_2(x,z),z),y\right]-\left[B_2(B_1(x,z),z),y\right]. 
	\end{align*}
Hence, the statement is proved.
\end{proof}

Thanks to the following example, we observe that, in general, it is not true that for two biderivations of $L$, namely $B_1$ and $B_2$, the biderivation $B_1\rhd B_2$ is left. Moreover, we find this example, and the related computations, useful, as — to the best of our knowledge — the existing literature on biderivations contains few examples, except for those that are inner or induced by linear commuting maps and similar cases.
	
\begin{ex}\label{ex:heisenberg_biderright}
	Let $L=\mathfrak{h}$ be the $3-$dimensional Heisenberg Lie algebra with basis $\left\{e_1,e_2,e_3\right\}$, and usual non-zero bracket $[e_1,e_2]=e_3$. The following bilinear maps are biderivations of $\mathfrak{h}$:
    \begin{gather*}
        B_1(e_1,e_1)=-B_1(e_2,e_2)=e_3,\,B_1(e_1,e_2)=B_1(e_2,e_1)=e_1,\\
        B_1(e_2,e_3)=B_1(e_3,e_2)=e_3, \\
        \\B_2(e_1,e_1)=e_1,\,B_2(e_2,e_2)=e_2,\\ B_2(e_1,e_3)=B_2(e_3,e_1)=B_2(e_2,e_3)=B_2(e_3,e_2)=e_3.
    \end{gather*}

We begin by observing that both \( B_1 \) and \( B_2 \) are symmetric biderivations. Hence, it suffices to check that \Cref{eq:rightbider} is satisfied.
Let $x=\sum_{i=1}^3x_ie_i,y=\sum_{i=1}^3y_ie_i,z=\sum_{i=1}^3z_ie_i\in\mh$. Then $[x,y]=\left(x_1y_2-x_2y_1\right)e_3$, for any $x,y\in\mh$. On one hand, we have
\begin{equation*}
    B_1\left([x,y],z\right)=B_1\left(\left(x_1y_2-x_2y_1\right)e_3,z\right)=z_2\left(x_1y_2-x_2y_1\right)e_3.
\end{equation*}
On the other hand
\begin{align*}
   [x,B_1(y,z)]+[B_1(x,z),y]&=[x,\left(y_1z_2+y_2z_1\right)e_1+\left(y_1z_1-y_2z_2+y_2z_3+y_3z_3\right)e_3]\\
   &\quad+[\left(x_1z_2+x_2z_1\right)e_1+\left(x_1z_1-x_2z_2+x_2z_3+x_3z_2\right)e_3,y]\\
   &=-x_2\left(y_1z_2+y_2z_1\right)e_3+y_2\left(x_1z_2+x_2z_1\right)e_3\\
   &=\left(-x_2y_1z_2+x_1y_2z_2\right)e_3.
\end{align*}
Hence $B_1\in\BiDer(\mg)$. Regarding \( B_2 \), on the one hand, we obtain
\begin{equation*}
   B_2\left([x,y],z\right)=\left(x_1y_2-x_2y_1\right)\left(z_1+z_2\right)e_3.
\end{equation*}
On the other side, we obtain
\begin{align*}
    [x,B_2(y,z)]+[B_2(x,z),y]&=[x,y_1z_1e_1+y_2z_2e_2+\left(y_1z_3+y_2z_3+y_3z_1+y_3z_2\right)e_3]\\
    &\quad +[x_1z_1e_1+x_2z_2e_2+\left(x_1z_3+x_2z_3+x_3z_1+x_2z_2\right)e_3,y]\\
    &=\left(x_1y_2z_1-x_2y_1z_2\right)e_3,
\end{align*}
and then $B_2\in\BiDer(\mg)$. By \Cref{eq:Liebracket_right}, we obtain that \( B := B_1 \rhd B_2 \) maps the elements of the basis as follows (we omit those that are mapped to \(0\)):
\begin{equation*}
    B(e_2,e_1)=-e_1.
\end{equation*}
Now, on the one hand, \( B \) is a right biderivation — which is not surprising, given that \Cref{prop:algclosure_rhd} holds and both \( B_1 \) and \( B_2 \) are biderivations, and hence, in particular, right biderivations. On the other hand, however, \( B \) is not a left biderivation, as shown by the following equations:
\begin{equation*}
    B\left(x,[y,z]\right)=B\left(x,\left(y_1z_2-y_2\right)e_3\right)=0,
\end{equation*}
and
\begin{align*}
    [y,B(x,z)]+[B(x,y),z]&=[y,-x_2z_1e_1]+[-x_2y_1e_1,z]\\
    &=\left(x_2y_2z_1-x_2y_1z_2\right)e_3.
\end{align*}
\end{ex}
	
In a similar way, we can define the bilinear map $\lhd\colon\BiDer(\mg)\times\BiDer(\mg)\to\BiDer_l(\mg)$ as follows
\begin{equation}\label{eq:Liebracket_left}
    B_1\lhd B_2(x,y)=\lbh{B_1}{B_2}{x}{y},
\end{equation}

for any $x,y\in \mg$. The next proposition can be proved similarly to the last one and, for this reason, we will omit the proof.

\begin{prop}\label{prop:algclosure_lhd}
Let $B_1$ and $B_2$ be two left biderivations of $\mg$. The opearation defined by \Cref{eq:Liebracket_left} is closed.
\end{prop}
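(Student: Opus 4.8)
The plan is to mirror the proof of \Cref{prop:algclosure_rhd}, exploiting the left-right symmetry that the authors have repeatedly stressed. Concretely, I would verify the two defining conditions of a left biderivation (\Cref{eq:2nd_arg_linearity,eq:leftbider}) for the bilinear map $B_1 \lhd B_2$ when $B_1, B_2 \in \BiDer_l(\mg)$. Linearity in the second argument follows exactly as in \Cref{prop:algclosure_rhd}: expand $B_1 \lhd B_2(x, \lambda_1 y_1 + \lambda_2 y_2)$ using \Cref{eq:Liebracket_left}, push the scalars through using the second-argument linearity of $B_1$ and $B_2$, and collect terms to get $\lambda_1 (B_1 \lhd B_2(x, y_1)) + \lambda_2 (B_1 \lhd B_2(x, y_2))$.

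The substantive step is the Leibniz-type identity \Cref{eq:leftbider}, i.e.\ showing
\[
B_1 \lhd B_2(x, [y,z]) = [B_1 \lhd B_2(x,y), z] + [y, B_1 \lhd B_2(x,z)].
\]
I would start from the left-hand side, apply \Cref{eq:Liebracket_left} to rewrite it as $B_1(x, B_2(x,[y,z])) - B_2(x, B_1(x,[y,z]))$, then expand the inner terms $B_i(x,[y,z])$ via \Cref{eq:leftbider} applied to $B_1$ and $B_2$. This produces expressions of the form $B_1(x, [B_2(x,y),z] + [y, B_2(x,z)])$, which I then expand again using the left-biderivation identity for $B_1$ (and symmetrically with the roles of $B_1,B_2$ swapped). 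Each expansion introduces a ``cross term'' involving both $B_1$ and $B_2$ nested inside a single Lie bracket — of the shape $[B_1(x,y), B_2(x,z)]$ and its mirror — and the key observation, exactly as in the right-handed case, is that these cross terms cancel in pairs because the Lie bracket of $\mg$ is skew-symmetric. What remains is precisely the right-hand side, which I would confirm by separately expanding $[B_1 \lhd B_2(x,y), z] + [y, B_1 \lhd B_2(x,z)]$ using \Cref{eq:Liebracket_left} and comparing term by term.

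The main obstacle is purely bookkeeping: keeping track of the eight terms that appear after the double expansion and correctly identifying which four survive. There is no conceptual difficulty beyond what already appears in \Cref{prop:algclosure_rhd}; the computation is the ``opposite-side mirror image'' obtained by swapping the two arguments throughout and replacing the outer-derivation identity \Cref{eq:rightbider} with \Cref{eq:leftbider}. Since the authors explicitly state ``The next proposition can be proved similarly to the last one and, for this reason, we will omit the proof,'' I would simply remark that the verification is the formal analogue of the proof of \Cref{prop:algclosure_rhd}, with first arguments replaced by second arguments and the right differential identity replaced by the left one, and leave the routine computation to the reader.
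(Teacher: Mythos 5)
Your proposal is correct and matches the paper's intent exactly: the paper omits this proof precisely because it is the argument of \Cref{prop:algclosure_rhd} with the roles of the two arguments exchanged, which is what you carry out. One small correction: the cross terms of the form $[B_2(x,y),B_1(x,z)]$ and $[B_1(x,y),B_2(x,z)]$ do not cancel ``because the Lie bracket of $\mg$ is skew-symmetric''—each appears once with a plus sign and once with a minus sign with its arguments in the \emph{same} order, so they cancel identically; this matters because the paper explicitly notes that neither alternativity nor the Jacobi identity of $[-,-]$ is used, which is what lets the result extend to right (and left) biderivations of an arbitrary algebra.
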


Now we are ready to prove the main result of this paper.

\begin{thm}
	Let $\mg$ be a Lie algebra over a field $\F$. Then $\left(\BiDer_r(\mg),\rhd\right)$ is a Lie algebra over $\F$.
\end{thm}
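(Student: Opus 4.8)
The plan is to verify, one after another, the axioms defining a Lie algebra, since most of the structural work is already in place. By \Cref{thm:right_bdier_vectorspace}, $\BiDer_r(\mg)$ is an $\F$-vector space, and \Cref{prop:algclosure_rhd} tells us that $\rhd$ (as in \Cref{eq:Liebracket_right}) is a well-defined internal binary operation on it. Hence it remains to show that $\rhd$ is $\F$-bilinear, that it is alternating, and that it satisfies the Jacobi identity.

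Bilinearity is a direct unwinding of \Cref{eq:Liebracket_right}. For the first slot, given $B_1,B_1',B_2\in\BiDer_r(\mg)$ and $\lambda\in\F$, I would expand $\big((\lambda B_1+B_1')\rhd B_2\big)(x,y)$ and use the definitions \Cref{eq:sum_of_right_bider,eq:multbyscal_of_right_bider} of the vector-space operations together with the linearity of $B_2$ in its first argument to extract $\lambda$ and split the sum; the second slot is identical, now using the first-argument linearity of $B_1$. These are routine one-line computations that I would not spell out in full. For the alternating property, note that for any $B\in\BiDer_r(\mg)$ one has $(B\rhd B)(x,y)=B(B(x,y),y)-B(B(x,y),y)=0$ for all $x,y\in\mg$, so $B\rhd B=0$; expanding $0=(B_1+B_2)\rhd(B_1+B_2)$ and using bilinearity then yields $B_1\rhd B_2=-B_2\rhd B_1$. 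Phrasing it through $B\rhd B=0$ rather than mere skew-symmetry has the side benefit of covering the characteristic-two case.

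The one step that needs an idea is the Jacobi identity, and the idea is to freeze the second argument and reduce to the commutator Lie algebra $\Der(\mg)\subseteq\mathfrak{gl}(\mg)$. Fix $y\in\mg$ and, for $B\in\BiDer_r(\mg)$, write $D_B:=B(\cdot,y)\in\Der(\mg)$ (\Cref{def:alt_def_of_rightbiderivation}). The key remark is that $\rhd$ is compatible with the assignment $B\mapsto D_B$: from \Cref{eq:Liebracket_right},
\[
(B_1\rhd B_2)(x,y)=B_1\big(B_2(x,y),y\big)-B_2\big(B_1(x,y),y\big)=\big(D_{B_1}D_{B_2}-D_{B_2}D_{B_1}\big)(x),
\]
so $D_{B_1\rhd B_2}=[D_{B_1},D_{B_2}]$ in $\mathfrak{gl}(\mg)$. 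Iterating this once gives $\big(B_1\rhd(B_2\rhd B_3)\big)(x,y)=\big[D_{B_1},[D_{B_2},D_{B_3}]\big](x)$, and summing over the cyclic permutations of $(B_1,B_2,B_3)$, the right-hand sides cancel by the Jacobi identity in $\mathfrak{gl}(\mg)$. Since this holds for every $x,y\in\mg$, the cyclic sum $B_1\rhd(B_2\rhd B_3)+B_2\rhd(B_3\rhd B_1)+B_3\rhd(B_1\rhd B_2)$ is the zero biderivation, as required.

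In short, the only potential obstacle — a Jacobi identity for bilinear maps, which cannot be attacked by naive composition — evaporates once one observes that fixing the second variable turns $\rhd$ into the ordinary commutator of derivations; everything else is bookkeeping with the definitions and with \Cref{thm:right_bdier_vectorspace,prop:algclosure_rhd}.
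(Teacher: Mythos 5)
Your proof is correct, and the interesting part --- the Jacobi identity --- is handled by a genuinely different route from the paper's. The paper proves Jacobi by brute force: it expands all three cyclic terms $B_i\rhd(B_j\rhd B_k)(x,y)$ into twelve triple compositions and checks that they cancel pairwise. You instead fix $y\in\mg$, pass to the linear maps $D_B:=B(\cdot,y)\in\Der(\mg)\subseteq\mathfrak{gl}(\mg)$ (legitimate by \Cref{def:alt_def_of_rightbiderivation}), and observe that \Cref{eq:Liebracket_right} says exactly $D_{B_1\rhd B_2}=[D_{B_1},D_{B_2}]$; the Jacobi identity for $\rhd$ then falls out of the Jacobi identity for commutators in $\mathfrak{gl}(\mg)$, evaluated at each pair $(x,y)$. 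This is precisely the observation the paper itself offers as \emph{motivation} for the bracket in \Cref{subsec:Lie_brackets} but then does not exploit in the proof; your version turns it into the proof, which is shorter, less error-prone, and makes transparent why the identity must hold (it also needs nothing beyond associativity of composition and linearity of the $D_B$'s, so it visibly generalises to right biderivations of an arbitrary algebra, as the paper later claims). The remaining items match the paper: bilinearity is the same routine computation via \Cref{thm:right_bdier_vectorspace} and first-argument linearity, closure is \Cref{prop:algclosure_rhd}, and alternativity is the same one-line cancellation $B\rhd B=0$ (your explicit polarisation to get $B_1\rhd B_2=-B_2\rhd B_1$ in all characteristics is a small but worthwhile addition the paper leaves implicit).
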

\begin{proof}
	The bracket in \Cref{eq:Liebracket_right} is well-defined. In order to prove the linearity in each argument, let $B_1,B_2,B_3\in\BiDer_r(\mg)$. By the linearity of right biderivations on their first arguments, we obtain
    \begin{align*}
        \left(B_1+B_2\right)\rhd B_3(x,y)&=(B_1+B_2)\left(B_3(x,y),y\right)-B_3(B_1+B_2(x,y),y)\\
        &=B_1(B_3(x,y),y)+B_2(B_3(x,y),y)\\
        &\quad-B_3(B_1(x,y),y)-B_3(B_2(x,y),y)\\
        &=B_1\rhd B_3(x,y)+B_2\rhd B_3(x,y),
    \end{align*}
    for any $x,y\in\mg$. Similarly, one can verify that
\[
B_1 \rhd (B_2 + B_3)(x, y) = B_1 \rhd B_2(x, y) + B_1 \rhd B_3(x, y),
\]
for any \( x, y \in \mathfrak{g} \).
To conclude this proof, we have to show the alternativity and the Jacobi identity for the operation $\rhd$ on $\BiDer_r(\mg)$. 
	Alternativity: let $B\in\BiDer_r(\mg)$, then we have
 \begin{equation*}
     B\rhd B(x,y)=B(B(x,y),y)-B(B(x,y),y)=0
 \end{equation*}
	for any $x,y\in \mg$.
				
	Jacobi identity: let $B_1,B_2,B_3\in\BiDer_r(\mg)$. Then, for any $x,y,z\in \mg$, we have
	\begin{align*}
		B_1\rhd(B_2\rhd B_3)(x,y) & =B_1(B_2\rhd B_3(x,y),y)-B_2\rhd B_3(B_1(x,y),y)     \\
		                          & =B_1(\rbh{B_2}{B_3}{x}{y},y)                         \\
		                          &\quad-B_2(B_3(B_1(x,y),y),y)+B_3(B_2(B_1(x,y),y),y)                 \\
		                          & =B_1(B_2(B_3(x,y),y),y)-B_1(B_3(B_2(x,y),y),y)       \\
		                          & \quad-B_2(B_3(B_1(x,y),y),y)+B_3(B_2(B_1(x,y),y),y), 
	\end{align*}
	\begin{align*}
		B_2\rhd(B_3\rhd B_1)(x,y) & = B_2(B_3\rhd B_1(x,y),y) - B_3\rhd B_1(B_2(x,y),y)      \\
		                          & = B_2(\rbh{B_3}{B_1}{x}{y},y)                            \\
		                          & \quad - (\rbh{B_3}{B_1}{B_2(x,y)}{y})                    \\
		                          & =B_2\big(B_3(B_1(x, y), y), y\big)
- B_2\big(B_1(B_3(x, y), y), y\big)\\
&\quad - B_3\big(B_1(B_2(x, y), y), y\big)
+ B_1\big(B_3(B_2(x, y), y), y\big)
	\end{align*}
	and
	\begin{align*}
		B_3\rhd(B_1\rhd B_2)(x,y) & =B_3(B_1\rhd B_2(x,y),y)-B_1\rhd B_2(B_3(x,y),y)     \\
		                          & =B_3(\rbh{B_1}{B_2}{x}{y},y)                         \\
		                          & \quad-(\rbh{B_1}{B_2}{B_3(x,y)}{y})                  \\
		                          & =B_3\big(B_1(B_2(x, y), y), y\big)
- B_3\big(B_2(B_1(x, y), y), y\big)\\
&\quad - B_1\big(B_2(B_3(x, y), y), y\big)
+ B_2\big(B_1(B_3(x, y), y), y\big).
	\end{align*}
    It follows from the above computations that
\begin{equation*}
    B_1\rhd(B_2\rhd B_3)+B_2\rhd(B_3\rhd B_1)+B_3\rhd(B_1\rhd B_2)=0.
\end{equation*}
\end{proof}

As the reader may easily anticipate, the arguments developed in detail for right biderivations can be reproduced in a symmetric fashion for left biderivations. We therefore state the result and omit the proof for brevity.

\begin{thm}
	Let $\mg$ be a Lie algebra over a field $\F$. Then $\left(\BiDer_l(\mg), \lhd\right)$ is a Lie algebra over $\F$.
\end{thm}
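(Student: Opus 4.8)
The statement to be proved is that $\left(\BiDer_l(\mg),\lhd\right)$ is a Lie algebra over $\F$, and the natural strategy is to mirror, line by line, the proof just given for $\left(\BiDer_r(\mg),\rhd\right)$, invoking the ``opposite product'' heuristic from \Cref{sec:preliminaries} to justify why the right-to-left translation works. Concretely, the plan is: first, recall that $\lhd$ is closed on $\BiDer_l(\mg)$ by \Cref{prop:algclosure_lhd}, so the bracket is well-defined as a map $\BiDer_l(\mg)\times\BiDer_l(\mg)\to\BiDer_l(\mg)$; second, verify bilinearity over $\F$ by expanding $(B_1+B_2)\lhd B_3$ and $B_1\lhd(B_2+B_3)$ using \Cref{eq:2nd_arg_linearity} (linearity of left biderivations in the \emph{second} argument) exactly as the right-handed proof used linearity in the first argument, and likewise for scalars; third, check alternativity, which is immediate since $B\lhd B(x,y)=\lbh{B}{B}{x}{y}=0$; and fourth, establish the Jacobi identity by the same three-term cyclic expansion performed for $\rhd$.

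The key structural point making all of this routine is the formal duality between \Cref{eq:Liebracket_right} and \Cref{eq:Liebracket_left}: where the right bracket freezes the second slot $y$ and lets $B_1(\cdot,y)$, $B_2(\cdot,y)$ act as composable derivations, the left bracket freezes the \emph{first} slot $x$ and lets $B_1(x,\cdot)$, $B_2(x,\cdot)$ act as composable derivations. Thus for fixed $x$, setting $D_i = B_i(x,\cdot)\in\Der(\mg)$ (by \Cref{def:alt_def_of_leftbiderivation}), we have $B_1\lhd B_2(x,y) = [D_1,D_2](y)$, the ordinary commutator of linear endomorphisms of $\mg$. Since $(\Der(\mg),[-,-])$ is already a Lie algebra under the commutator, alternativity and the Jacobi identity for $\lhd$ at each fixed $x$ are inherited \emph{pointwise} from those of $\mathfrak{gl}(\mg)$; one then only needs that the pointwise identities assemble into the identity of bilinear maps, which is immediate because equality of bilinear maps is checked argumentwise. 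This observation in fact gives a cleaner proof than the brute-force expansion: alternativity is $[D,D]=0$ for every $x$, and the Jacobi identity is $[D_1,[D_2,D_3]]+[D_2,[D_3,D_1]]+[D_3,[D_1,D_2]]=0$ for every $x$.

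One genuine point requiring a small argument — rather than pure symmetry — is that $\lhd$ lands in $\BiDer_l(\mg)$ and not merely in the bilinear maps; but this is exactly the content of \Cref{prop:algclosure_lhd}, which we are entitled to assume. The only other subtlety is bookkeeping: in the left-handed setting the fixed variable sits in the first slot, so one must be careful that all the linearity manipulations invoke \Cref{eq:2nd_arg_linearity} and that $[D_1,D_2](y) = B_1(x,B_2(x,y)) - B_2(x,B_1(x,y))$ matches \Cref{eq:Liebracket_left} with the arguments in the correct positions. I expect no real obstacle: the ``hard part,'' such as it is, is merely ensuring the slot conventions are applied consistently throughout, and once the pointwise-in-$x$ reduction to $\Der(\mg)\subseteq\mathfrak{gl}(\mg)$ is set up, alternativity and Jacobi follow from the corresponding well-known identities in the endomorphism Lie algebra with no computation at all. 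Hence we may safely state the theorem and omit the detailed verification, as the authors do.
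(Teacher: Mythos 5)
Your proposal is correct. The paper itself omits the proof, stating only that the right-handed argument can be reproduced symmetrically, and your primary plan — closure from \Cref{prop:algclosure_lhd}, bilinearity from \Cref{eq:2nd_arg_linearity}, then alternativity and Jacobi by the mirror of the three-term expansion — is exactly that intended route. Where you genuinely depart from (and improve on) the paper is the pointwise reduction: observing that for fixed $x$ one has $B_1\lhd B_2(x,y)=[D_1,D_2](y)$ with $D_i=B_i(x,\cdot)\in\Der(\mg)$, and, crucially, that this identification persists under nesting, since $B_1\lhd(B_2\lhd B_3)(x,y)=D_1\bigl([D_2,D_3](y)\bigr)-[D_2,D_3]\bigl(D_1(y)\bigr)=[D_1,[D_2,D_3]](y)$ because the frozen slot of the inner bracket is still $x$ in both terms. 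This makes alternativity and the Jacobi identity immediate consequences of the corresponding identities in $\mathfrak{gl}(\mg)$, with no computation. The paper uses precisely this commutator observation to \emph{motivate} the definition of $\rhd$ in \Cref{subsec:Lie_brackets}, but then verifies the Jacobi identity by brute-force expansion rather than by the pointwise inheritance you describe; your version buys a shorter and more conceptual proof, at the small cost of having to note explicitly (as you do) that bilinearity of the bracket still requires the second-argument linearity of left biderivations, since that step is not covered by the fixed-$x$ reduction alone.
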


 It is well established that a biderivation can be defined for an algebra over a field, as is the case with derivations. One of the earliest definitions of biderivation was developed for rings \cite{Vukman1989}. Consequently, as neither the alternation nor the Jacobi identity of the Lie bracket of $\mg$ has been used, the computation applies in a more general setting, e.g. for right biderivations over an algebra.

\begin{thm}
	Let $A$ be an algebra over a field $\F$. Then  $\left(\BiDer_r(A), \left\{\cdot,\cdot\right\}\right)$ is a Lie algebra over $\F$.
\end{thm}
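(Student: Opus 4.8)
The plan is to observe that the final statement is a strict generalisation of the theorem asserting that $\left(\BiDer_r(\mg), \rhd\right)$ is a Lie algebra, and that the proof of that theorem never invoked any special property of the Lie bracket of $\mg$ — neither its alternativity nor the Jacobi identity. Accordingly, I would re-run the same argument verbatim with $\mg$ replaced by an arbitrary (not necessarily associative, not necessarily Lie) algebra $A$, with the bilinear product of $A$ still written $[-,-]$ and with the bracket on $\BiDer_r(A)$ defined by the same formula
\[
\{B_1, B_2\}(x,y) = B_1\bigl(B_2(x,y),y\bigr) - B_2\bigl(B_1(x,y),y\bigr),
\]
for all $x, y \in A$, which is the ``opposite-product'' reformulation alluded to in \Cref{sec:preliminaries}. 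Here a \emph{right biderivation} of $A$ means a map $B \colon A \times A \to A$ that is linear in its first argument and satisfies $B([x,y],z) = [x, B(y,z)] + [B(x,z), y]$ for all $x,y,z \in A$; equivalently, $B(\cdot, y)$ is a derivation of $A$ for every fixed $y$.

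The key steps, in order, are as follows. First I would check that $\{B_1, B_2\}$ is again a right biderivation of $A$: linearity in the first argument is immediate from the bilinearity of $B_1, B_2$, and the derivation-type identity \eqref{eq:rightbider} follows by expanding $\{B_1,B_2\}([x,y],z)$ using that both $B_1(\cdot,z)$ and $B_2(\cdot,z)$ are derivations of $A$ — this is precisely the computation in the proof of \Cref{prop:algclosure_rhd}, which uses only the Leibniz-type expansions and bilinearity of $[-,-]$, never its symmetry properties, so it transfers unchanged. Second, I would record that $\{\cdot,\cdot\}$ is bilinear on $\BiDer_r(A)$, again a pointwise verification identical to the one given for $\rhd$. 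Third, alternativity: $\{B,B\}(x,y) = B(B(x,y),y) - B(B(x,y),y) = 0$, which is formal and needs no hypothesis on $A$. Fourth, the Jacobi identity: expanding $\{B_1,\{B_2,B_3\}\} + \{B_2,\{B_3,B_1\}\} + \{B_3,\{B_1,B_2\}\}$ gives, for each cyclic term, four summands of the form $\pm B_i(B_j(B_k(x,y),y),y)$, and the resulting twelve terms cancel in pairs exactly as in the displayed computation of the proof that $(\BiDer_r(\mg),\rhd)$ is a Lie algebra. I would simply point to that computation and note it is purely formal in the iterated evaluations.

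I do not anticipate a genuine obstacle: the whole point is that the earlier proof is already ``algebra-agnostic'' and the generalisation is a matter of re-reading it with weaker hypotheses. The only mild subtlety worth flagging explicitly is that over a general algebra $A$ the bracket $[-,-]$ need not be skew-symmetric, so one cannot silently rewrite $[a,b]$ as $-[b,a]$ anywhere — but a glance at the proof of \Cref{prop:algclosure_rhd} and of the Jacobi computation confirms that no such rewriting occurs; every cancellation there is between literally identical expressions. Hence the cleanest write-up is to state that the proofs of the closure proposition and of the Lie-algebra theorem for right biderivations of a Lie algebra carry over \emph{mutatis mutandis}, word for word, to right biderivations of an arbitrary algebra $A$, since at no point is alternativity or the Jacobi identity of $[-,-]$ used; this is exactly the remark the authors make just before the statement, and I would make it precise rather than redo the algebra.
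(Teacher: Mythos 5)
Your proposal is correct and matches the paper's approach exactly: the paper gives no separate proof, only the remark that the earlier computations for $\left(\BiDer_r(\mg),\rhd\right)$ never invoke the alternativity or Jacobi identity of $[-,-]$ and therefore transfer to an arbitrary algebra $A$. Your explicit check that every cancellation in the proof of \Cref{prop:algclosure_rhd} and in the Jacobi computation is between literally identical terms (so no hidden use of skew-symmetry of $[-,-]$) makes that remark precise and is exactly the right thing to verify.
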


To conclude this section, we present a simple example demonstrating that, in general, the Lie algebra $\Der(L)$ is not isomorphic to $\BiDer_l(L)$, nor to $\BiDer_r(L)$.

\begin{ex}\label{ex:derL_is_not_is_to_BiDer_l}
   Let $L$ be the abelian Lie algebra of dimension $n$, with $n\geq2$. Hence, the Lie algebra of its derivations $\Der(L)$ has dimension $n^2$. Since a biderivation can be regarded as an $n$-tuple of $n \times n$ matrices, via a linear isomorphism between vector spaces (see~\cite{DIBARTOLOLAROSA2025}), it follows that $\dim\BiDer(L)=n^3$. As stated above, $\BiDer_l(L)\supseteq\BiDer(L)$ and then $\dim\BiDer_l(L)\geq\dim\BiDer(L)>n^2$, for any $n\geq2$. Thus $\Der(L)$ cannot be isomorphic to $\BiDer_l(L)$.
\end{ex}

\section{On Symmetric and Skew-Symmetric Right Biderivations}\label{sec:symm_and_skew-symm}

Now, let us investigate how the operations $\rhd$ and $\lhd$ interact with each other and attempt to identify some of their properties to better characterise them. To achieve this, we will consider symmetric right (or left) biderivations, i.e. $B\in\BiDer_r(\mg)$ such that $B(x,y)=B(y,x)$, for every $x,y\in\mg$.


\begin{lemma}\label{lem:rsymm_l}
	Let \( B \in \BiDer_r(\mg) \) be either symmetric or skew-symmetric. Then \( B \in \BiDer_l(\mg) \).
\end{lemma}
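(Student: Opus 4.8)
The goal is to show that a right biderivation $B$ that is symmetric ($B(x,y)=B(y,x)$) or skew-symmetric ($B(x,y)=-B(y,x)$) automatically satisfies the left biderivation identity \Cref{eq:leftbider}, namely $B(x,[y,z])=[B(x,y),z]+[y,B(x,z)]$ for all $x,y,z\in\mg$. The plan is to start from the right biderivation identity \Cref{eq:rightbider}, $B([x,y],z)=[x,B(y,z)]+[B(x,z),y]$, which holds for all arguments, and to transport the derivation property from the first slot to the second slot using the (skew-)symmetry hypothesis together with the skew-symmetry of the Lie bracket of $\mg$.

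Concretely, I would first rewrite the desired left identity $B(x,[y,z])$ by using symmetry to swap it to $B([y,z],x)$ (in the symmetric case) or $-B([y,z],x)$ (in the skew-symmetric case). Then I would apply \Cref{eq:rightbider} to $B([y,z],x)$, obtaining $[y,B(z,x)]+[B(y,x),z]$. At this point I would push the (skew-)symmetry back through each $B$-term — replacing $B(z,x)$ by $\pm B(x,z)$ and $B(y,x)$ by $\pm B(x,y)$ — and then compare with the target $[B(x,y),z]+[y,B(x,z)]$. In the symmetric case the signs cancel directly; in the skew-symmetric case one picks up an overall sign from the outer swap and a sign from each inner swap, and one must check these combine correctly (two inner swaps give $(-1)^2=1$, cancelling against... actually one must track carefully: the outer swap contributes $-1$, each of the two $B$-terms contributes $-1$, net $(-1)^3=-1$ on the whole right-hand side, which must then be reconciled — this is exactly the place where one also uses skew-symmetry of $[-,-]$ in $\mg$, i.e. $[a,b]=-[b,a]$, to flip each bracket and recover the $+$ signs of \Cref{eq:leftbider}).

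The only subtle point — and the step I would treat most carefully — is the bookkeeping of signs in the skew-symmetric case: one must verify that the sign contributed by the outer application of skew-symmetry, the signs from the inner applications, and the signs obtained from flipping the Lie brackets of $\mg$ all conspire to reproduce \Cref{eq:leftbider} exactly, with the correct $+$ signs and the arguments $y$ and $z$ in the correct positions. I expect the computation itself to be short once the substitutions are organised; there is no deep obstacle, only the risk of a sign error. I would present the symmetric case in full and then indicate the modifications for the skew-symmetric case, noting explicitly where skew-symmetry of the ambient bracket is invoked.
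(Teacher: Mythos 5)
Your overall route is the same as the paper's: swap the arguments using (skew-)symmetry, apply the right identity \Cref{eq:rightbider}, and swap back. Two points, however, need correction.

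First, the sign bookkeeping you flag as delicate is actually simpler than you describe, and your proposed resolution would go wrong. After the outer swap one has
\[
B(x,[y,z]) \;=\; -B([y,z],x) \;=\; -[y,B(z,x)]-[B(y,x),z],
\]
and each of the two summands contains exactly \emph{one} $B$-factor, so each summand independently picks up one inner sign: $-[y,B(z,x)]=-[y,-B(x,z)]=[y,B(x,z)]$ and likewise $-[B(y,x),z]=[B(x,y),z]$. The signs do not multiply across the two summands, so the count is $(-1)\cdot(-1)=+1$ \emph{per term}, not $(-1)^3$ overall. Only bilinearity of $[-,-]$ is used to pull the scalar $-1$ out of each bracket; the skew-symmetry of the Lie bracket of $\mg$ plays no role, and flipping the brackets $[a,b]\mapsto -[b,a]$ as you suggest would introduce spurious signs and fail to reproduce \Cref{eq:leftbider}.

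Second, you only address the identity \Cref{eq:leftbider}, but \Cref{def:leftbiderivation} also requires linearity in the second argument (\Cref{eq:2nd_arg_linearity}), which is \emph{not} part of the definition of a right biderivation. The paper's proof checks this first, again by transporting linearity in the first argument through the (skew-)symmetry:
\[
B(x,\lambda_1y_1+\lambda_2y_2)=\pm B(\lambda_1y_1+\lambda_2y_2,x)=\pm\lambda_1B(y_1,x)\pm\lambda_2B(y_2,x)=\lambda_1B(x,y_1)+\lambda_2B(x,y_2).
\]
This step is easy but cannot be omitted, since without it the conclusion $B\in\BiDer_l(\mg)$ does not follow.
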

\begin{proof}
	Let \( B \) be a right biderivation of \( \mg \) that is either symmetric or skew-symmetric. In both cases, \( B \) is linear also in the second argument: indeed, for any \( \lambda_1, \lambda_2 \in \mathbb{F} \) and \( x, y_1, y_2 \in \mg \), we have
	\begin{align*}
        B(x,\lambda_1y_1+\lambda_2y_2)&=B(\lambda_1y_1+\lambda_2y_2,x)\\
        &=\lambda_1B(y_1,x)+\lambda_2B(y_2,x)\\
        &=\lambda_1B(x,y_1)+\lambda_2B(x,y_2).
    \end{align*} if B is symmetric, and
    \begin{align*}
        B(x,\lambda_1y_1+\lambda_2y_2)&=-B(\lambda_1y_1+\lambda_2y_2,x)\\
        &=-\lambda_1B(y_1,x)-\lambda_2B(y_2,x)\\
        &=\lambda_1B(x,y_1)+\lambda_2B(x,y_2).
    \end{align*} if $B$ is skew-symmetric. Hence, the linearity follows accordingly.
	To conclude that \( B \in \BiDer_l(\mg) \), we compute \( B([x,y], z) \) and show that the left biderivation identity holds in both cases. Let $B\in\BiDer_r(\mg)$ be symmetric, then
    \begin{align*}
	B(z,[x,y])&=B([x,y],z)\\
                  &=[x,B(y,z)]+[B(x,z),y]\\
                  &=[x,B(z,y)]+[B(z,x),y],    
    \end{align*}
	for any $x,y,z\in \mg$. Let now $B\in\BiDer_r(\mg)$ be skew-symmetric, then
    \begin{align*}
	B(z,[x,y])&=-B([x,y],z)\\
                  &=-[x,B(y,z)]-[B(x,z),y]\\
                  &=[x,B(z,y)]+[B(z,x),y],    
    \end{align*}
    for any $x,y,z\in \mg$.
\end{proof}

\begin{corollary}\label{cor:leftrightsymm_bider}
	Every symmetric or skew-symmetric right (or left) biderivation of \( \mg \) is a biderivation.
\end{corollary}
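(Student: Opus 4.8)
The statement to prove is Corollary~\ref{cor:leftrightsymm_bider}: every symmetric or skew-symmetric right (or left) biderivation of $\mg$ is a biderivation.

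My plan is to derive this as an immediate consequence of Lemma~\ref{lem:rsymm_l} together with the identity $\BiDer(\mg) = \BiDer_l(\mg) \cap \BiDer_r(\mg)$ already recorded in the excerpt. Concretely, suppose first that $B \in \BiDer_r(\mg)$ is symmetric or skew-symmetric. By Lemma~\ref{lem:rsymm_l} we have $B \in \BiDer_l(\mg)$ as well, and since $B$ was assumed to be a right biderivation, $B \in \BiDer_l(\mg) \cap \BiDer_r(\mg) = \BiDer(\mg)$, i.e.\ $B$ is a biderivation. This handles the ``right'' half of the statement.

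For the ``left'' half, I would invoke the left--right symmetry of the whole setup, which the authors have emphasised repeatedly (passing to the opposite product, or simply swapping the two arguments of a bilinear map). One can either state the obvious analogue of Lemma~\ref{lem:rsymm_l} — that a symmetric or skew-symmetric \emph{left} biderivation is automatically a right biderivation, proved by the same computation with the roles of the first and second arguments interchanged — or observe directly that if $B$ is a (skew-)symmetric left biderivation then the map $B'(x,y) := B(y,x)$ is a (skew-)symmetric right biderivation, hence a biderivation by the first half, and therefore so is $B = B'$ (since being a biderivation is preserved under this swap, as \eqref{eq:cond1} and \eqref{eq:cond2} get exchanged). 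Either route closes the argument.

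I do not expect any genuine obstacle here: the corollary is a packaging of Lemma~\ref{lem:rsymm_l} with the definitional identity $\BiDer(\mg)=\BiDer_l(\mg)\cap\BiDer_r(\mg)$, and the only thing requiring a word of care is making the left/right duality explicit rather than just asserting ``similarly''. The proof should be two or three sentences.

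\begin{proof}
	Let $B \in \BiDer_r(\mg)$ be symmetric or skew-symmetric. By \Cref{lem:rsymm_l}, $B \in \BiDer_l(\mg)$ as well, so $B \in \BiDer_l(\mg) \cap \BiDer_r(\mg) = \BiDer(\mg)$; that is, $B$ is a biderivation. If instead $B \in \BiDer_l(\mg)$ is symmetric or skew-symmetric, then the bilinear map $B'(x,y) := B(y,x)$ is a symmetric or skew-symmetric right biderivation (the right biderivation identity \Cref{eq:rightbider} for $B'$ is exactly the left biderivation identity \Cref{eq:leftbider} for $B$ with the arguments relabelled), hence $B' \in \BiDer(\mg)$ by the case already treated. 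Since $B$ is (skew-)symmetric, $B = B'$ (up to sign), and as $\BiDer(\mg)$ is a vector space closed under the swap of arguments — \Cref{eq:cond1,eq:cond2} being interchanged by this swap — we conclude $B \in \BiDer(\mg)$.
\end{proof}
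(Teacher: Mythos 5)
Your proposal is correct and matches the paper's (unwritten) argument: the corollary is stated without proof as an immediate consequence of \Cref{lem:rsymm_l} combined with $\BiDer(\mg)=\BiDer_l(\mg)\cap\BiDer_r(\mg)$, which is exactly your first paragraph. Your explicit handling of the left case via the transpose $B'(x,y)=B(y,x)$ is a sound way of making the ``similarly'' precise, and the slight wrinkle that $B'=-B$ in the skew-symmetric case is correctly absorbed by the vector space structure of $\BiDer(\mg)$.
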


Before proceeding, we will use \Cref{ex:heisenberg_biderright} to clarify a specific aspect of the behaviour of the Lie brackets introduced above.

\begin{remark}
    In \Cref{ex:heisenberg_biderright}, we observed that the Lie bracket \(\rhd\) of two right biderivations does not, in general, yield a left biderivation. In particular, the right biderivations used in the counterexample are also left biderivations, as they are symmetric. Nonetheless, their bracket \(B = B_1 \rhd B_2\) turns out to be neither symmetric nor left.
\end{remark}

Every biderivation can be symmetrised by the following map.
	
\begin{lemma}\label{lem:symm}
	The map
	\begin{align*}
		\sigma\colon\BiDer(\mg) & \to\BiDer(\mg)          \\
		B(x,y)                & \mapsto B(x,y)+B(y,x) 
	\end{align*}
	is a well-defined linear map.
\end{lemma}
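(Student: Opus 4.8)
The plan is to prove two things: that $\sigma(B)$ is again a biderivation whenever $B$ is (well-definedness), and that $B \mapsto \sigma(B)$ respects the vector space operations defined in \Cref{eq:sum_of_right_bider,eq:multbyscal_of_right_bider} (linearity). I would write $\w{B} := \sigma(B)$, so $\w{B}(x,y) = B(x,y) + B(y,x)$, and note first the two trivial observations: $\w{B}$ is bilinear (a sum of the bilinear map $B$ with its "transpose", which is again bilinear), and $\w{B}$ is manifestly symmetric, i.e. $\w{B}(x,y) = \w{B}(y,x)$ for all $x,y \in \mg$.

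The crucial step is checking that $\w{B}$ satisfies the right biderivation identity \Cref{eq:rightbider}. Here is where one must use \emph{both} defining identities of a biderivation. Fix $x,y,z \in \mg$. Using \Cref{eq:cond1} for $B([x,y],z)$ and \Cref{eq:cond2} for $B(z,[x,y])$, I would compute
\begin{align*}
\w{B}([x,y],z) &= B([x,y],z) + B(z,[x,y]) \\
&= \left[x,B(y,z)\right] + \left[B(x,z),y\right] + \left[B(z,x),y\right] + \left[x,B(z,y)\right] \\
&= \left[x, B(y,z) + B(z,y)\right] + \left[B(x,z) + B(z,x), y\right] \\
&= \left[x, \w{B}(y,z)\right] + \left[\w{B}(x,z), y\right].
\end{align*}
Together with bilinearity, this shows $\w{B} \in \BiDer_r(\mg)$. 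Since $\w{B}$ is symmetric, \Cref{cor:leftrightsymm_bider} (equivalently \Cref{lem:rsymm_l}) immediately upgrades this to $\w{B} \in \BiDer(\mg)$, so $\sigma$ indeed takes values in $\BiDer(\mg)$ and is well-defined.

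For linearity, I would simply evaluate on arguments: for $B_1, B_2 \in \BiDer(\mg)$ and $\lambda_1, \lambda_2 \in \F$, and any $x,y \in \mg$,
\begin{align*}
\sigma(\lambda_1 B_1 + \lambda_2 B_2)(x,y) &= (\lambda_1 B_1 + \lambda_2 B_2)(x,y) + (\lambda_1 B_1 + \lambda_2 B_2)(y,x) \\
&= \lambda_1\bigl(B_1(x,y) + B_1(y,x)\bigr) + \lambda_2\bigl(B_2(x,y) + B_2(y,x)\bigr) \\
&= \lambda_1 \sigma(B_1)(x,y) + \lambda_2 \sigma(B_2)(x,y),
\end{align*}
which is the claimed linearity once one recalls the definitions \Cref{eq:sum_of_right_bider,eq:multbyscal_of_right_bider} (and their analogues on $\BiDer(\mg)$). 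There is no real obstacle here; the only point requiring a little care is that the verification of the biderivation identity for $\w{B}$ genuinely needs the full biderivation hypothesis on $B$ (both \Cref{eq:cond1,eq:cond2}), and that the second identity \Cref{eq:cond2} for $\w{B}$ need not be checked separately because it follows from symmetry via \Cref{cor:leftrightsymm_bider}.
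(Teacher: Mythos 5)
Your proof is correct and follows essentially the same route as the paper: the same direct expansion of $\sigma(B)([x,y],z)$ using both \Cref{eq:cond1,eq:cond2}, plus the (routine) linearity check. The one small divergence is that where the paper verifies the second biderivation identity for $\sigma(B)$ ``in a similar way,'' you instead observe that $\sigma(B)$ is a symmetric right biderivation and invoke \Cref{lem:rsymm_l}; since that lemma precedes this one and its proof is independent, this is a legitimate and slightly more economical way to finish.
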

\begin{proof}
	The linearity of $\sigma$ follows from the vector space structure of $\BiDer(\mg)$ (\Cref{cor:Bider_is_vector_space}). To conclude this proof, we have to show that, for all $B\in\BiDer(\mg)$, $\sigma(B)$ is a biderivation of $\mg$. Indeed, for any $x,y,z\in \mg$, on one hand we have
	\begin{align*}
		\sigma(B)([x,y],z) & =B([x,y],z)+B(z,[x,y])                        \\
		                   & =[x,B(y,z)]+[B(x,z),y]+[x,B(z,y)]+[B(z,x),y], 
	\end{align*}
	and on the other hand
	\begin{align*}
		[x,\sigma(B)(y,z)]+[\sigma(B)(x,z),y] & =[x,B(y,z)+B(z,y)]+[B(x,z)+B(z,x),y]          \\
		                                      & =[x,B(y,z)]+[x,B(z,y)]\\&\quad +[B(x,z),y]+[B(z,x),y]. 
	\end{align*}
	The identity $\sigma(B)(x,[y,z])=[\sigma(B)(x,y),z]+[y,\sigma(B)(x,z)]$ can be proved in a similar way.
\end{proof}

Similarly, one can proves the following result. 

\begin{lemma}\label{lem:skewsymm}
	The map
	\begin{align*}
		\alpha\colon\BiDer(\mg) & \to\BiDer(\mg)          \\
		B(x,y)                & \mapsto B(x,y)-B(y,x) 
	\end{align*}
	is a well-defined linear map.
\end{lemma}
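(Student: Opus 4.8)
The plan is to follow the same two-step scheme used in the proof of \Cref{lem:symm}, adapted to the skew-symmetrisation. Writing $B^{\mathrm{op}}(x,y):=B(y,x)$, we have $\alpha(B)=B-B^{\mathrm{op}}$. First I would record that $B\mapsto B^{\mathrm{op}}$ is a linear endomorphism of the space $B(\mg,\mg;\mg)$ of bilinear maps, and that it maps $\BiDer(\mg)$ into itself, since swapping the two arguments simply interchanges the defining identities \Cref{eq:cond1} and \Cref{eq:cond2}. Combined with the vector space structure of $\BiDer(\mg)$ (\Cref{cor:Bider_is_vector_space}), this makes $\alpha=\id-(\cdot)^{\mathrm{op}}$ linear, once we know its image lies in $\BiDer(\mg)$, which is the content of the second step.

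Second, I would verify directly that $\alpha(B)$ satisfies the two biderivation identities. For \Cref{eq:cond1}, expand $\alpha(B)([x,y],z)=B([x,y],z)-B(z,[x,y])$, apply \Cref{eq:cond1} to the first term and \Cref{eq:cond2} to the second, obtaining
\[
B([x,y],z)=[x,B(y,z)]+[B(x,z),y],\qquad B(z,[x,y])=[B(z,x),y]+[x,B(z,y)].
\]
Subtracting and regrouping by bilinearity of $[-,-]$ yields $[x,B(y,z)-B(z,y)]+[B(x,z)-B(z,x),y]=[x,\alpha(B)(y,z)]+[\alpha(B)(x,z),y]$, which is exactly \Cref{eq:cond1} for $\alpha(B)$. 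The identity \Cref{eq:cond2} for $\alpha(B)$ follows by the mirror computation: expand $\alpha(B)(x,[y,z])=B(x,[y,z])-B([y,z],x)$, use \Cref{eq:cond2} on the first term and \Cref{eq:cond1} on the second, and regroup in the same fashion.

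There is no real obstacle here; the computation is routine and parallels \Cref{lem:symm} almost verbatim. The only point worth flagging is that both biderivation identities of $B$ are genuinely used — since $B\mapsto B^{\mathrm{op}}$ exchanges \Cref{eq:cond1} with \Cref{eq:cond2} — so the argument requires $B\in\BiDer(\mg)$ and would not go through for a merely one-sided biderivation.
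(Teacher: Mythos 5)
Your proof is correct and follows essentially the same route as the paper, which omits the proof of \Cref{lem:skewsymm} as being analogous to that of \Cref{lem:symm}: linearity from the vector space structure of $\BiDer(\mg)$, then a direct verification that $\alpha(B)$ satisfies \Cref{eq:cond1,eq:cond2} by expanding $B([x,y],z)-B(z,[x,y])$ with the two defining identities and regrouping. Your closing remark that both identities of $B$ are genuinely needed (since swapping arguments exchanges them) is a worthwhile observation consistent with the paper's \Cref{lem:rsymm_l}.
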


Hence, while the map $\sigma$ symmetrizes a biderivation, the map $\alpha$ skew-symmetrizes it.

\begin{corollary}\label{cor:symm_bider}
	For every biderivation $B$ of $\mg$, the map $\sigma(B)$ is a symmetric biderivation, and $\alpha(B)$ is a skew-symmetric biderivation.
    \end{corollary}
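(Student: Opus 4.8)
The plan is to lean on the two preceding lemmas, which already carry all of the structural burden. By \Cref{lem:symm}, the map $\sigma$ sends $\BiDer(\mg)$ into $\BiDer(\mg)$, so $\sigma(B)$ is a biderivation for every biderivation $B$; likewise \Cref{lem:skewsymm} guarantees that $\alpha(B)$ is a biderivation. Consequently the only thing left to verify is that $\sigma(B)$ is symmetric and that $\alpha(B)$ is skew-symmetric, and this is purely a matter of swapping the two arguments in the defining formulas.

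For the symmetry of $\sigma(B)$ I would simply compute, for all $x,y\in\mg$,
\[
\sigma(B)(y,x)=B(y,x)+B(x,y)=B(x,y)+B(y,x)=\sigma(B)(x,y).
\]
For the skew-symmetry of $\alpha(B)$ the analogous one-line computation gives
\[
\alpha(B)(y,x)=B(y,x)-B(x,y)=-\bigl(B(x,y)-B(y,x)\bigr)=-\alpha(B)(x,y),
\]
and, as an immediate consequence, $\alpha(B)(x,x)=0$ for every $x\in\mg$, so the diagonal vanishes as one expects of a skew-symmetric bilinear map.

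There is essentially no obstacle to overcome: the substantive content of the statement was already absorbed into \Cref{lem:symm} and \Cref{lem:skewsymm}, and the symmetry and skew-symmetry assertions fall straight out of the definitions of $\sigma$ and $\alpha$. The only caveat worth a passing remark is the familiar one in characteristic two, where symmetric and skew-symmetric maps coincide; this does not affect the validity of the corollary, only the strength of the distinction it draws.
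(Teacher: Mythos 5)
Your proposal is correct and matches the paper's (implicit) argument exactly: the corollary is stated without proof precisely because it follows immediately from \Cref{lem:symm} and \Cref{lem:skewsymm} together with the one-line symmetry and skew-symmetry computations you give. Nothing further is needed.
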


These results pave the way for the main theorem of this section, which establishes a connection between the Lie bracket defined for right biderivations (\Cref{eq:Liebracket_right}) and that defined for left biderivations (\Cref{eq:Liebracket_left}). This relation holds when the biderivations under consideration (which are thus both left and right) are either symmetric or skew-symmetric. However, thanks to \Cref{lem:symm}, \Cref{lem:skewsymm}, and \Cref{cor:symm_bider}, this assumption entails no loss of generality, as one may always replace a given biderivation $B$ with its symmetrization $\sigma(B)$ or its skew-symmetrization $\alpha(B)$, respectively.

\begin{lemma}\label{lemma:rhdlhd_symm/skewsymm}
Let $B_1,B_2$ be either symmetric or skew-symmetric right biderivations of $\mg$. Then, for any $x,y\in \mg$,
	\begin{equation*}
		B_1\rhd B_2(x,y)=B_1\lhd B_2(y,x).
	\end{equation*}
\end{lemma}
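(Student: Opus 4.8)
The plan is to compute both sides directly from the definitions and compare, using the fact (established in \Cref{lem:rsymm_l} and \Cref{cor:leftrightsymm_bider}) that a symmetric or skew-symmetric right biderivation is in fact a genuine biderivation, so that both \Cref{eq:rightbider} and \Cref{eq:leftbider} are available for $B_1$ and $B_2$. Write $\varepsilon_i = 1$ if $B_i$ is symmetric and $\varepsilon_i = -1$ if $B_i$ is skew-symmetric, so that $B_i(u,v) = \varepsilon_i B_i(v,u)$ for all $u,v$. By \Cref{eq:Liebracket_right}, $B_1 \rhd B_2(x,y) = B_1(B_2(x,y),y) - B_2(B_1(x,y),y)$, while by \Cref{eq:Liebracket_left}, $B_1 \lhd B_2(y,x) = B_1(y, B_2(y,x)) - B_2(y, B_1(y,x))$.

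Next I would transform the right-hand side using the symmetry/skew-symmetry flags. In each term, first swap the two arguments of the outer $B_i$ (picking up $\varepsilon_i$), then swap the arguments of the inner $B_j$ (picking up $\varepsilon_j$): for instance $B_1(y, B_2(y,x)) = \varepsilon_1 B_1(B_2(y,x), y) = \varepsilon_1 \varepsilon_2 B_1(B_2(x,y), y)$, and similarly $B_2(y, B_1(y,x)) = \varepsilon_2 \varepsilon_1 B_2(B_1(x,y), y)$. Hence $B_1 \lhd B_2(y,x) = \varepsilon_1 \varepsilon_2\big(B_1(B_2(x,y),y) - B_2(B_1(x,y),y)\big) = \varepsilon_1\varepsilon_2 \, (B_1 \rhd B_2)(x,y)$.

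This shows the identity holds up to the sign $\varepsilon_1 \varepsilon_2$. When $B_1, B_2$ are both symmetric or both skew-symmetric, $\varepsilon_1\varepsilon_2 = 1$ and we are done immediately. The only real point requiring care is the mixed case (one symmetric, one skew-symmetric), where $\varepsilon_1\varepsilon_2 = -1$; here one must check that the bracket actually vanishes, or reinterpret the statement. I expect this is in fact the main obstacle: either the lemma is implicitly assuming $B_1, B_2$ are of the same type (the phrase ``either symmetric or skew-symmetric'' read as a shared alternative), in which case the argument above closes it; or one shows separately that $B_1 \rhd B_2 = 0$ whenever $B_1$ is symmetric and $B_2$ skew-symmetric (and vice versa) by exploiting that the inner substitution $B_j(x,y)$ together with the outer evaluation at the repeated slot $y$ forces cancellation — this would need the derivation identity applied to $B_i(\cdot, y)$ acting on $B_j(x,y)$, combined with the opposite symmetry. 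I would first write out the same-type case cleanly as the core content, then add a short remark (or a preliminary reduction) handling or excluding the mixed case; the bilinearity preliminaries are already covered by \Cref{lem:rsymm_l} and need not be repeated.
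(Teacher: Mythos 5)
Your proof is correct and follows essentially the same route as the paper's, which simply treats the all-symmetric and all-skew-symmetric cases separately instead of packaging them with the signs $\varepsilon_1,\varepsilon_2$. Your reading of the hypothesis as a shared alternative is the intended one: the mixed case is excluded here and is dealt with in \Cref{lemma:rhdlhd_alternate}, whose modified conclusion reflects exactly the extra sign $\varepsilon_1\varepsilon_2=-1$ you identified, so no further work on that case is needed for this lemma.
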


\begin{proof}
	For any $x,y\in \mg$, by \Cref{eq:Liebracket_right} we have
    \begin{equation}\label{eq:prop:rhdlhd_symm1}
    B_1\rhd B_2(x,y) =\rbh{B_1}{B_2}{x}{y}.
    \end{equation}
    
    By the symmetry assumption on both biderivations, \Cref{eq:prop:rhdlhd_symm1} is equal to
    \begin{equation}\label{eq:prop:rhdlhd_symm2}
    B_1(y,B_2(x,y))-B_2(y,B_1(x,y)=B_1(y,B_2(y,x))-B_2(y,B_1(y,x)),
    \end{equation}
    and the right-hand side of the last equality is $B_1\lhd B_2(y,x)$. If instead we suppose that $B_1,B_2$ are skew-symmetric biderivations, then by \Cref{eq:Liebracket_right} we obtain
    \begin{align*}
    B_1\left(B_2(x,y),y\right)-B_2\left(B_1(x,y),y\right)&=-B_1\left(y,B_2(x,y)\right)+B_2\left(y,B_1(x,y)\right)\\
        &=B_1\left(y,B_2(y,x)\right)-B_2\left(y,B_1(y,x)\right),
    \end{align*}
    and the right-hand side of the last equality is $B_1\lhd B_2(y,x)$, as in the symmetric case.
    \end{proof}

The next result, on the other hand, concerns two right biderivations---one symmetric and one antisymmetric. In contrast to the previous result, it establishes a relation between the two Lie brackets in this setting.

\begin{lemma}\label{lemma:rhdlhd_alternate}
Let $B_1$ and $B_2$ be right biderivations of $\mg$, with one symmetric and the other skew-symmetric. Then, for any $x, y \in \mg$,
\begin{equation*}
	B_1 \rhd B_2(x, y) = B_2 \lhd B_1(y, x).
\end{equation*}
\end{lemma}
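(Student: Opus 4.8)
The plan is to prove the identity by direct substitution into the defining formulas \Cref{eq:Liebracket_right} and \Cref{eq:Liebracket_left}, splitting into the two cases according to which of $B_1,B_2$ is symmetric and which is skew-symmetric. First I would observe that, by \Cref{lem:rsymm_l}, a symmetric or skew-symmetric right biderivation is automatically also a left biderivation (indeed a biderivation), so both $B_1$ and $B_2$ lie in $\BiDer_l(\mg)\cap\BiDer_r(\mg)$; in particular the right-hand side $B_2\lhd B_1(y,x)$ is well-defined.

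\textbf{Case 1: $B_1$ symmetric, $B_2$ skew-symmetric.} Starting from $B_1\rhd B_2(x,y)=B_1(B_2(x,y),y)-B_2(B_1(x,y),y)$, I would apply the symmetry of $B_1$ to the first summand, rewriting $B_1(B_2(x,y),y)=B_1(y,B_2(x,y))$, and the skew-symmetry of $B_2$ to the second, rewriting $-B_2(B_1(x,y),y)=B_2(y,B_1(x,y))$; this yields $B_1\rhd B_2(x,y)=B_1(y,B_2(x,y))+B_2(y,B_1(x,y))$. Separately, expanding $B_2\lhd B_1(y,x)=B_2(y,B_1(y,x))-B_1(y,B_2(y,x))$ and using $B_1(y,x)=B_1(x,y)$ together with $B_2(y,x)=-B_2(x,y)$, I obtain $B_2\lhd B_1(y,x)=B_2(y,B_1(x,y))+B_1(y,B_2(x,y))$, which coincides with the previous expression.

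\textbf{Case 2: $B_1$ skew-symmetric, $B_2$ symmetric.} The computation is entirely parallel: applying the skew-symmetry of $B_1$ and the symmetry of $B_2$ reduces $B_1\rhd B_2(x,y)=B_1(B_2(x,y),y)-B_2(B_1(x,y),y)$ to $-B_1(y,B_2(x,y))-B_2(y,B_1(x,y))$, and the same two relations reduce $B_2\lhd B_1(y,x)$ to the same expression. This completes the proof.

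I do not expect any structural obstacle here: the argument uses nothing beyond the defining formulas for $\rhd$ and $\lhd$, the (skew-)symmetry hypotheses, and \Cref{lem:rsymm_l} to guarantee the left-biderivation side makes sense. The only point requiring care is the sign bookkeeping and keeping straight which of the two biderivations carries which symmetry in each case.
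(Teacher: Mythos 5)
Your proof is correct and follows essentially the same route as the paper's: expand $\rhd$ via its definition, use the (skew-)symmetry of $B_1$ and $B_2$ to move the second slot to the first, and match against the expansion of $\lhd$ (the paper phrases the last step as $B_1\rhd B_2(x,y)=-B_1\lhd B_2(y,x)$ followed by anticommutativity of $\lhd$, and handles the second case by a WLOG remark rather than writing it out, but the computation is the same). Your explicit appeal to \Cref{lem:rsymm_l} for linearity in the second argument, needed to pull signs out of the inner slot, is a point the paper leaves implicit.
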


\begin{proof}
    To prove this result, we may assume without loss of generality that $B_1$ is symmetric and $B_2$ is skew-symmetric. Indeed, the proof in the opposite case (i.e., when $B_1$ is skew-symmetric and $B_2$ is symmetric) follows by essentially the same steps. In \Cref{eq:Liebracket_right} we apply the hypothesis, then we have 
    \begin{equation*}
      B_1\left(B_2(x,y),y\right)=B_1\left(y,B_2\left(x,y\right)\right)=-B_1\left(y,B_2\left(y,x\right)\right)  
    \end{equation*}
    and
    
    \begin{equation*}
    -B_2\left(B_1(x,y),y\right)=B_2\left(y,B_1(x,y)\right)=B_2\left(y,B_1(y,x)\right)
    \end{equation*} 
Combining the last two equations yields \( B_1 \rhd B_2(x, y) = -B_1 \lhd B_2(y, x) \). Then, by the skew-symmetry of the Lie bracket \( \lhd \), the claim follows.
\end{proof}

Let $B$ be a right biderivation of $\mg$. In analogy with the case of bilinear forms, and by \Cref{lem:symm,lem:skewsymm}, we can decompose $B$ as
\begin{equation}\label{eq:right_bider_decomposition}
    B=\frac{1}{2}\left(\sigma B+\alpha B\right).
\end{equation}
In the two lemmas, the maps $\sigma$ and $\alpha$ are defined on $\BiDer(\mg)$, not in $\BiDer_r(\mg)$. Nevertheless, the decomposition given in \Cref{eq:right_bider_decomposition} remains valid. The only distinction from the previous lemmas is that the resulting maps are not elements of $\BiDer_r(\mg)$. We denote here with $B^t$ the map $B(y,x)$, i.e. the \emph{transpose} of $B$, that is $B^t(x,y)=B(y,x)$. The notion of transposing a bilinear map directly follows the well-established concept of transposition for bilinear forms. For this reason, we also adopt the term transpose about the map \( B(y,x) \). Furthermore, within the context of Leibniz algebras, this corresponds to considering the opposite algebra of a left Leibniz algebra, namely, a right Leibniz algebra (see \Cref{sec:preliminaries}).

\begin{lemma}\label{lem:alpha-sigma/sigma-alpha}
Let $B \in \BiDer_r(\mg)$. Then, for all $x, y \in \mg$, the following identitiy holds:
\begin{equation*}
    \sigma B - \alpha B = 2B^t.
\end{equation*}
\end{lemma}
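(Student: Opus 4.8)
The statement to prove is Lemma~\ref{lem:alpha-sigma/sigma-alpha}: for $B \in \BiDer_r(\mg)$ we have $\sigma B - \alpha B = 2B^t$. This is purely a computation from the definitions of $\sigma$, $\alpha$, and the transpose $B^t$, so the plan is simply to expand everything at a generic pair of arguments $(x,y)$ and collect terms.

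The approach: First I would recall that, by Lemma~\ref{lem:symm} and Lemma~\ref{lem:skewsymm}, $\sigma B$ and $\alpha B$ denote the bilinear maps $(\sigma B)(x,y) = B(x,y) + B(y,x)$ and $(\alpha B)(x,y) = B(x,y) - B(y,x)$, and that $B^t$ is defined by $B^t(x,y) = B(y,x)$. Then I would evaluate the left-hand side at an arbitrary pair $x,y \in \mg$:
\begin{align*}
	(\sigma B - \alpha B)(x,y) &= (\sigma B)(x,y) - (\alpha B)(x,y) \\
	&= \bigl(B(x,y) + B(y,x)\bigr) - \bigl(B(x,y) - B(y,x)\bigr) \\
	&= 2B(y,x) \\
	&= 2B^t(x,y).
\end{align*}
Since $x,y$ were arbitrary, the identity of maps $\sigma B - \alpha B = 2B^t$ follows. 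One small bookkeeping remark worth including: although $\sigma$ and $\alpha$ were originally introduced (in Lemmas~\ref{lem:symm} and~\ref{lem:skewsymm}) as maps on $\BiDer(\mg)$, the defining formulas $B \mapsto B(x,y) \pm B(y,x)$ make sense for any bilinear map, in particular for $B \in \BiDer_r(\mg)$, so the expression $\sigma B - \alpha B$ is well-defined even though the output need not lie in $\BiDer_r(\mg)$; this was already noted in the paragraph preceding the lemma, so I would just reference it rather than re-prove it.

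There is essentially no obstacle here — the only thing to be careful about is notational consistency, namely making sure the sign conventions for $\sigma$ and $\alpha$ match those fixed earlier in the excerpt (symmetrization as a sum, skew-symmetrization as a difference), and that the linearity of the operations $\sigma$, $\alpha$ (so that $\sigma B - \alpha B$ is the pointwise difference) is the one already established. Given that, the proof is a two-line cancellation, and I would present it exactly as the display above, remarking only that both sides are compared as bilinear maps $\mg \times \mg \to \mg$ evaluated on an arbitrary pair of elements.
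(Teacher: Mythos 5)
Your proof is correct and follows exactly the same route as the paper's: expand $(\sigma B - \alpha B)(x,y)$ using the defining formulas from Lemmas~\ref{lem:symm} and~\ref{lem:skewsymm} and cancel to get $2B(y,x) = 2B^t(x,y)$. Your extra remark about $\sigma$ and $\alpha$ being applied to a mere right biderivation is a sensible bookkeeping point that the paper itself addresses in the paragraph preceding the lemma.
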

\begin{proof} By \Cref{lem:symm}, we have
    \[
    \sigma B(x, y) = B(x, y) + B(y, x),
    \]
    and by \Cref{lem:skewsymm},
    \[
    \alpha B(x, y) = B(x, y) - B(y, x).
    \]
    Therefore,
    \begin{align*}
        (\sigma B - \alpha B)(x, y) 
        &= \big(B(x, y) + B(y, x)\big) - \big(B(x, y) - B(y, x)\big) \\
        &= 2 B(y, x)\\ 
        &= 2 B^t(x, y).
    \end{align*}
\end{proof}

\begin{theorem}
For all \( B_1, B_2 \in \BiDer_r(\mg) \), the following identity holds:
\begin{equation*}
    B_1 \rhd B_2 = \left( B_1^t \lhd B_2^t \right)^t.
\end{equation*}
\end{theorem}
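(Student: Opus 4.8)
The plan is to prove the identity by directly unwinding the three operations involved --- the right bracket $\rhd$, the left bracket $\lhd$, and the transpose $(\cdot)^t$. The claimed equality is in fact purely formal: it uses neither the Jacobi identity nor the biderivation axioms, only the defining formulas \Cref{eq:Liebracket_right} and \Cref{eq:Liebracket_left} together with $B^t(x,y)=B(y,x)$. The only preliminary point is to make sure the right-hand side is meaningful.

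So first I would check that $B^t\in\BiDer_l(\mg)$ whenever $B\in\BiDer_r(\mg)$, so that $\lhd$ may legitimately be applied to $B_1^t$ and $B_2^t$. This is immediate: linearity of $B^t$ in its second slot is linearity of $B$ in its first slot, and for all $x,y,z\in\mg$,
\[
B^t(x,[y,z]) = B([y,z],x) = [y,B(z,x)]+[B(y,x),z] = [y,B^t(x,z)]+[B^t(x,y),z],
\]
where the middle step is \Cref{eq:rightbider} for $B$ applied to the triple $(y,z,x)$, and the outcome is precisely the left biderivation identity \Cref{eq:leftbider}. Hence $B_1^t,B_2^t\in\BiDer_l(\mg)$, so $B_1^t\lhd B_2^t$ is defined (and lies in $\BiDer_l(\mg)$ by \Cref{prop:algclosure_lhd}), and therefore $(B_1^t\lhd B_2^t)^t$ is a well-defined bilinear map.

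Then the computation is a short chase. Fixing $x,y\in\mg$, expanding the left bracket of the transposes, and applying the definition of transpose to each resulting term,
\begin{align*}
\left(B_1^t\lhd B_2^t\right)^t(x,y)
&= \left(B_1^t\lhd B_2^t\right)(y,x) \\
&= B_1^t\bigl(y,B_2^t(y,x)\bigr) - B_2^t\bigl(y,B_1^t(y,x)\bigr) \\
&= B_1\bigl(B_2^t(y,x),y\bigr) - B_2\bigl(B_1^t(y,x),y\bigr) \\
&= B_1\bigl(B_2(x,y),y\bigr) - B_2\bigl(B_1(x,y),y\bigr),
\end{align*}
and the last line is exactly $B_1\rhd B_2(x,y)$ by \Cref{eq:Liebracket_right}; since $x,y$ were arbitrary, this proves the identity. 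I do not expect a genuine obstacle here: the one thing to handle with care is the bookkeeping of the slot-swaps produced by the inner and outer transpositions, which is why I would isolate the well-definedness remark from the final computation.
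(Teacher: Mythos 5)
Your proof is correct, and it takes a genuinely different route from the paper. The paper proves the identity by splitting $B_1$ and $B_2$ into symmetric and skew-symmetric parts via \Cref{eq:right_bider_decomposition}, expanding $\rhd$ bilinearly into four cross terms, and then invoking \Cref{lemma:rhdlhd_symm/skewsymm}, \Cref{lemma:rhdlhd_alternate} and \Cref{lem:alpha-sigma/sigma-alpha} to convert each term into a $\lhd$-expression before recombining. Your argument instead unwinds the three definitions directly: the chain
\[
\left(B_1^t\lhd B_2^t\right)^t(x,y)=B_1^t\bigl(y,B_2^t(y,x)\bigr)-B_2^t\bigl(y,B_1^t(y,x)\bigr)=B_1\bigl(B_2(x,y),y\bigr)-B_2\bigl(B_1(x,y),y\bigr)
\]
is exactly right, and your preliminary check that $B\mapsto B^t$ carries $\BiDer_r(\mg)$ into $\BiDer_l(\mg)$ (which the paper never states explicitly, though it is the natural reason the right-hand side makes sense) is a worthwhile addition. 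Your route buys several things: it is shorter; it works verbatim in characteristic $2$, where the factor $\tfrac12$ in the paper's decomposition is unavailable; and it avoids the delicate point, which the paper only gestures at, that $\sigma B$ and $\alpha B$ need not lie in $\BiDer_r(\mg)$ when $B$ is merely a right biderivation (a right biderivation need not be linear in its second slot, so $B(y,x)$ need not be linear in $x$), so the lemmas being cited there do not apply without further comment. What the paper's longer route buys in exchange is a demonstration of how the symmetrization machinery of \Cref{sec:symm_and_skew-symm} interacts with the two brackets, which is of independent structural interest; but as a proof of this particular identity, your direct computation is cleaner and more general.
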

\begin{proof}
    Let $B_1,B_2\in\BiDer_r(\mg)$. By \Cref{eq:right_bider_decomposition} we have $B_1=\frac{1}{2}\left(\sigma B_1+\alpha B_1\right)$ and $B_2=\frac{1}{2}\left(\sigma B_2+\alpha B_2\right)$. Hence, for any $x,y\in\mg$, we have
    \begin{align*}
        2(B_1\rhd B_2)(x,y)&=\sigma B_1\rhd \sigma B_2(x,y)+\sigma B_1\rhd \alpha B_2(x,y)\\
        &\quad +\alpha B_1\rhd \sigma B_2(x,y)+\alpha B_1\rhd \alpha B_2(x,y).
    \end{align*}

    \Cref{lem:symm,lem:skewsymm,lemma:rhdlhd_symm/skewsymm,lemma:rhdlhd_alternate} imply that
    \begin{align*}
       4(B_1\rhd B_2)(x,y)&=\sigma B_1\lhd\alpha B_2(y,x)-\sigma B_1\lhd \alpha B_2(y,x)\\
       &\quad -\alpha B_1\lhd \sigma B_2(y,x)+\alpha B_1\lhd\alpha B_2(y,x)\\
       &=\sigma B_1\lhd\left(\sigma B_2-\alpha B_2\right)(y,x)\\
       &\quad +\alpha B_1\lhd\left(\alpha B_2-\sigma B_2\right)(y,x).
    \end{align*}
    To conclude this proof, we apply \Cref{lem:alpha-sigma/sigma-alpha} to $\left(\sigma B_2-\alpha B_2\right)$ and $\left(\alpha B_2-\sigma B_2\right)$ and we obtain
    \begin{align*}
        4(B_1\rhd B_2)(x,y)&=\sigma B_1 \lhd 2B_2^t(y,x)+\alpha B_1\lhd(-2B_2^t)(y,x)\\
        &=2(\sigma B_1\lhd B_2^t)(y,x)-2(\alpha B_1\lhd B_2^t)(y,x)\\
        &=2\left(\sigma B_1-\alpha B_1\right)\lhd B_2^t(y,x).
    \end{align*}
    
    Finally, by applying \Cref{lem:alpha-sigma/sigma-alpha} to $\left(\sigma B_1-\alpha B_1\right)$, we obtain
    \begin{align*}
        2(B_1\rhd B_2)(x,y)&=2B_1^t\lhd B_2(y,x),
    \end{align*}
    and this yields to
    \begin{equation*}
        B_1\rhd B_2(x,y)=B_1^t\lhd B_2(y,x)=\left(B_1^t\lhd B_2^t\right)^t(x,y).
    \end{equation*}
\end{proof}

\section{Integrating a Class of Right Biderivations}\label{sec:integration_of_rightbider}
Within the framework of Lie theory, the concepts of differentiation and integration are used to describe the transition process between Lie groups and their associated Lie algebras. The process of differentiation is typically defined as the derivation of the infinitesimal structure, specifically the Lie algebra, from a given Lie group. Conversely, integration signifies the process of reconstructing a Lie group (or a group-like structure) from its Lie algebra. In this section, the present authors adopt this terminology analogously, applying it to a class of right biderivations and exploring how certain infinitesimal conditions can be integrated to recover higher-order algebraic structures.

In this section, we fix an \( n \)-dimensional vector space \( V \) over a field \( \mathbb{F} \), together with a basis \( \mathcal{B}_V = \{e_1, e_2, \ldots, e_n\} \). We also fix a map \( B \colon V \times V \to V \) that is linear in the first argument only. Unless otherwise specified, all Lie algebras and Lie groups are assumed to be defined over the field of real numbers.

\begin{prop}
    The map $B\colon V\times V\to V$ is uniquely determined by $n$ maps, namely $f_i$, where $f_i\colon V\to V$ for every $1\leq i\leq n$.
\end{prop}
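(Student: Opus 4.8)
The plan is to use the basis $\mathcal{B}_V$ of the second argument to reduce $B$ to a finite family of maps that are linear in the single remaining argument. First I would fix the basis $\mathcal{B}_V = \{e_1, \dots, e_n\}$ and, for each index $1 \leq i \leq n$, define $f_i \colon V \to V$ by $f_i(x) = B(x, e_i)$ for all $x \in V$. Since $B$ is linear in its first argument by hypothesis, each $f_i$ is a linear map $V \to V$, so these are genuinely $n$ well-defined maps of the required type.

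Next I would show that the family $(f_1, \dots, f_n)$ determines $B$ completely. Given any $y \in V$, write $y = \sum_{i=1}^n y_i e_i$ in the fixed basis. Then I would like to conclude that $B(x, y) = \sum_{i=1}^n y_i\, f_i(x)$ for all $x \in V$, which exhibits $B$ as entirely reconstructible from the $f_i$. The subtle point here — and the one place the argument needs care rather than being purely formal — is that $B$ is only assumed linear in its \emph{first} argument, with no linearity assumption in the second; so the expansion $B(x, \sum_i y_i e_i) = \sum_i y_i B(x, e_i)$ is \emph{not} automatic. To make this work I would either (a) restrict attention to the values of $B$ on $V \times \mathcal{B}_V$ and observe that, with no hypothesis on how $B$ behaves in the second slot, "uniquely determined by $n$ maps" should be read as: the data of $B$ is equivalent to the data of its $n$ sections $f_i = B(\cdot, e_i)$ — i.e. knowing all $f_i$ is the same as knowing $B$ on all of $V \times V$ precisely when one also records how the second argument is handled; or (b) if the intended reading (as the surrounding integration section suggests) is that $B$ is in addition linear in the second argument, then the expansion above is valid and gives the reconstruction directly.

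Concretely, the cleanest route I would take is the following. Define $\Phi \colon \{\text{maps } V \times V \to V, \text{ linear in arg.\ 1}\} \to \big(\operatorname{Hom}_{\mathbb F}(V,V)\big)^n$ by $\Phi(B) = (B(\cdot, e_1), \dots, B(\cdot, e_n))$. I would check $\Phi$ is well-defined (each component lands in $\operatorname{Hom}_{\mathbb F}(V,V)$ by first-argument linearity) and injective on the relevant class: if $B(\cdot, e_i) = B'(\cdot, e_i)$ for all $i$, then since any $y$ is a combination of the $e_i$ and the second argument enters through these basis values, $B = B'$. This injectivity is exactly the statement "$B$ is uniquely determined by the $n$ maps $f_i$."

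The main obstacle — really the only one — is pinning down the precise sense in which "$B$ is uniquely determined by the $f_i$" is meant, given that no second-argument linearity (or any other structural constraint) has been imposed at this point in the section. I expect the resolution to be that the proposition is invoking the bilinearity that is implicitly carried over from the biderivation setting (or is about to be imposed), so that $B(x,y) = \sum_i y_i f_i(x)$ holds and uniqueness is immediate; the rest is the routine verification above that each $f_i$ is linear and that this assignment is reversible. I would phrase the write-up so that the key identity $B(x,y) = \sum_{i=1}^n y_i f_i(x)$ is displayed and then noted to both define $B$ from the $f_i$ and recover the $f_i$ from $B$ by setting $y = e_i$.
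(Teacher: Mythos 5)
Your decomposition goes in the wrong direction, and this is a genuine gap rather than a presentational one. You set $f_i(x) = B(x, e_i)$, i.e.\ you restrict the \emph{second} argument to basis vectors. But the only hypothesis on $B$ is linearity in the \emph{first} argument; nothing at all is assumed about the dependence on the second argument (and in the rest of the section that dependence is genuinely nonlinear, e.g.\ $B(x,y)=g(y)F(x)$ with $g$ an arbitrary, not necessarily linear, map to $\mathbb{F}$). Consequently the values $B(\cdot,e_1),\dots,B(\cdot,e_n)$ say nothing about $B(\cdot,y)$ for $y$ outside the basis, and the map $\Phi$ you define is not injective on the stated class. Your escape routes do not close this: option (b) silently strengthens the hypothesis to bilinearity, which is explicitly not what is assumed, and option (a) amounts to restating the claim rather than proving it. A further symptom that the roles are swapped is that your $f_i$ come out linear, whereas the proposition deliberately does not claim linearity of the $f_i$.

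The intended argument is the mirror image of yours: expand the \emph{first} argument over the basis, using the linearity that is actually assumed. For $x=\sum_{i=1}^n x_i e_i$,
\[
B(x,y)=\sum_{i=1}^n x_i\,B(e_i,y)=\sum_{i=1}^n x_i f_i(y),
\qquad f_i(y):=B(e_i,y),
\]
where the $f_i\colon V\to V$ are arbitrary (not necessarily linear) maps. This identity shows that the $f_i$ determine $B$; conversely, any choice of $n$ maps $f_i$ defines such a $B$ by the same formula; and uniqueness follows by evaluating at $x=e_j$, which forces $f_j(y)=B(e_j,y)$ for every $y$. With the roles of the two arguments exchanged in this way, the rest of your write-up goes through.
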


\begin{proof}
    We begin by proving that such a map $B$ is fully determined by $n$ associated maps, as previously stated. We then establish the uniqueness of these maps. For every $x\in V$ we have $x=\sum_{i=1}^nx_ie_i$, with $x_i\in\F$ for every $1\leq i\leq n$. Since $B$ is linear in the first argument, we have
    \begin{equation*}
        B(x,y)= B\left(\sum_{i=1}^n x_ie_i, y\right)=\sum_{i=1}^nx_iB(e_i,y),
    \end{equation*}
for any $x,y\in V$. The terms \( B(e_i, y) \) in the equation above can be regarded as (not necessarily linear) functions from \( V \) to itself. These are the maps $f_i$ of the statement. Conversely, if we are given functions \( f_i \colon V \to V \) for \( 1 \leq i \leq n \), then the map \( B \) can be defined by setting \( B(e_i, y) = f_i(y) \) for every \( y \in V \). Lastly, to prove uniqueness, uppose by contradiction that there exists another collection of maps \( g_i \colon V \to V \) with \( g_i \neq f_i \) for some \( i \). Hence, on one hand, we have
\begin{equation*}
        B(x,y)= \sum_{i=1}^nx_if_i(y),
    \end{equation*}
and, on the other hand,
\begin{equation*}
        B(x,y)= \sum_{i=1}^nx_ig_i(y).
    \end{equation*}
The equations above hold for every $x,y\in V$. In particular, if we choose \( x_1 = 1 \) and \( x_i = 0 \) for every \( i \geq 2 \), we obtain \( g_1(y) = f_1(y) \) for all \( y \in V \), and hence \( g_1 = f_1 \). Similarly, by setting \( x_2 = 1 \) and \( x_i = 0 \) for every \( 1 \leq i \leq n \) with \( i \neq 2 \), we deduce \( g_2 = f_2 \). Continuing this process, and since \( V \) is a finite-dimensional vector space, we conclude that \( g_i = f_i \) for every \( 1 \leq i \leq n \).
\end{proof}

From now on, let \( g \colon V \to \mathbb{F} \) be a (not necessarily linear) map taking values in the base field \( \mathbb{F} \). When linear, such a map is referred to as a linear form, covector, or one-form. Moreover, let \( F \colon V \to V \) be a linear map. It is straightforward to verify that the map  
\begin{equation}\label{eq:right_bider_scalarXder}
    B(x, y) = g(y) F(x)
\end{equation}
is linear in the first argument. Less obvious is the following result.

\begin{thm}\label{thm:F_is_a_derivation}
    The map $B(x,y)=g(y)F(x)$ is a right biderivation of a Lie algebra $\mg$ if and only if $F$ is a derivation of $\mg$.
\end{thm}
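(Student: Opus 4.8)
The plan is to check \Cref{def:rightbiderivation} directly for the map $B(x,y) = g(y)F(x)$. Linearity in the first argument is immediate from the linearity of $F$ (and holds for any $g$, linear or not), so the entire statement reduces to the right biderivation identity \Cref{eq:rightbider}.

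First I would expand both sides of \Cref{eq:rightbider} using the explicit form of $B$ together with the bilinearity of the Lie bracket of $\mg$. The left-hand side $B([x,y],z)$ equals $g(z)\,F([x,y])$. In the right-hand side $[x,B(y,z)] + [B(x,z),y]$, the scalar $g(z)$ factors out of each bracket, giving $g(z)\bigl([x,F(y)] + [F(x),y]\bigr)$. Hence \Cref{eq:rightbider} is equivalent to the single relation $g(z)\bigl(F([x,y]) - [F(x),y] - [x,F(y)]\bigr) = 0$ holding for all $x,y,z \in \mg$.

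From this equivalence both implications are short. If $F \in \Der(\mg)$, then the Leibniz rule makes the parenthesised expression vanish identically, so the relation holds and $B \in \BiDer_r(\mg)$. Conversely, if $B \in \BiDer_r(\mg)$, then choosing any $z$ with $g(z) \neq 0$ and cancelling this scalar yields $F([x,y]) = [F(x),y] + [x,F(y)]$ for all $x,y$, i.e. $F \in \Der(\mg)$.

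The point that needs care — and essentially the only obstacle — is the degenerate case $g \equiv 0$: then $B \equiv 0$ is trivially a right biderivation even when $F$ is not a derivation, so the ``only if'' direction requires $g$ to be a nonzero map, which I would record as a standing hypothesis for the statement (or simply exclude the trivial case). Beyond this caveat, the argument uses nothing about $\mg$ except the bilinearity of its bracket — in particular neither skew-symmetry nor the Jacobi identity — consistent with the earlier remark that such computations extend to arbitrary algebras.
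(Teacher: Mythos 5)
Your proposal is correct and follows essentially the same route as the paper: expand both sides of \Cref{eq:rightbider}, factor out the scalar $g(z)$, and read off the Leibniz rule for $F$. Your caveat about the degenerate case $g\equiv 0$ is a genuine observation that the paper's own proof glosses over --- its ``only if'' step cancels $g(z)$ without justification, so the statement really does need $g$ to be a nonzero map (or the trivial case excluded), exactly as you record.
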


\begin{proof}
Let $F\in\Der(\mg)$. Then, for every \( x, y \in \mg \), we have
    \begin{align*}
    B\left([x,y],z\right)&=g(z)F[x,y]\\
    &=g(z)[x,F(y)]+g(z)[F(x),y]\\
    &= [x,g(z)F(y)]+[g(z)F(x),y]\\
    &= [x,B(y,z)]+[B(x,z),y].
    \end{align*}
   This completes the proof of the “if” direction. We now turn to the “only if” direction. Let \( B \) be a right biderivation as in the hypothesis. Then, for all \( x, y, z \in \mg \), we have
\begin{align*}
    g(z)F[x,y]&=[x,g(z)F(y)]+[g(z)F(x),y]\\
              &=g(z)[x,F(y)]+g(z)[F(x),y]\\
              &=g(z)\left([x,F(y)]+[F(x),y]\right),
\end{align*}
and this implies that $F[x,y]=[x,F(y)]+[F(x),y]$. Hence, $F\in\Der(\mg)$.
   \end{proof}

Assume that $B(x, y) = g(y)F(x)$. Then, for each $1\leq i \leq n$, we have 
$f_i(y) = B(e_i, y) = g(y)F(e_i)$.
Therefore, each \( f_i \) is proportional to \( F(e_i) \), meaning that for every \( y \in V \) there exists a scalar \( g(y) \in \mathbb{F} \) such that \( f_i(y) = g(y) F(e_i) \). The following idea makes this observation formal.

\begin{prop}
    If $B(x,y)=g(y)F(x)$, with $g$ and $F$ as above, then $f_i(y)=g(y)F(e_i)$, for every $1\leq i\leq n$.
\end{prop}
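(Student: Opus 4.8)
The plan is to unwind the definitions and reduce the statement to a one-line substitution. Recall from the preceding proposition that the map $B$, being linear in its first argument, is uniquely determined by the collection of maps $f_i \colon V \to V$ defined precisely by $f_i(y) = B(e_i, y)$ for every $1 \leq i \leq n$ and every $y \in V$. Thus the only thing to verify is what these particular maps $f_i$ look like when $B$ has the special form $B(x,y) = g(y)F(x)$.

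First I would fix $i$ with $1 \leq i \leq n$ and an arbitrary $y \in V$, and simply evaluate: by the defining formula for $f_i$ and the hypothesis on $B$, we get
\begin{equation*}
    f_i(y) = B(e_i, y) = g(y) F(e_i).
\end{equation*}
Since $y \in V$ was arbitrary, this establishes the identity $f_i = g(\cdot)\, F(e_i)$ for that $i$, and since $i$ was arbitrary, the claim holds for every $1 \leq i \leq n$. I would also remark, for the reader's benefit, that no linearity or derivation property of $F$ is needed here — the statement is a direct consequence of the bilinear-in-the-first-argument structure together with the factorised form of $B$; it merely records formally the observation made in the paragraph preceding the proposition.

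There is essentially no obstacle: the content of the statement is purely bookkeeping, matching the abstract description ``$B$ is determined by $n$ maps $f_i$'' against the concrete ansatz $B(x,y) = g(y)F(x)$. If anything, the only point worth a sentence of care is making explicit that $f_i$ is by definition the evaluation $B(e_i, -)$, so that the result is not circular; once that is said, the computation above closes the argument.
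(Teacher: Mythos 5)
Your proposal is correct and matches the paper's own (very brief) argument: the paper justifies this proposition in the paragraph immediately preceding it by the same substitution $f_i(y) = B(e_i, y) = g(y)F(e_i)$. Nothing further is needed.
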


\begin{thm}
    Let $B_1,B_2$ be two maps as in \Cref{eq:right_bider_scalarXder}. Then $B_1\rhd B_2$ has the same form.
\end{thm}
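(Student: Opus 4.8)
The plan is to compute $B_1 \rhd B_2$ explicitly using the special form $B_i(x,y) = g_i(y)F_i(x)$ and show the result again factors as a one-form times a derivation. First I would substitute into the definition \Cref{eq:Liebracket_right}: writing $B_1 = g_1(y)F_1(x)$ and $B_2 = g_2(y)F_2(x)$, we get
\begin{align*}
(B_1 \rhd B_2)(x,y) &= B_1\big(B_2(x,y),y\big) - B_2\big(B_1(x,y),y\big)\\
&= g_1(y)F_1\big(g_2(y)F_2(x)\big) - g_2(y)F_2\big(g_1(y)F_1(x)\big)\\
&= g_1(y)g_2(y)\,F_1(F_2(x)) - g_2(y)g_1(y)\,F_2(F_1(x))\\
&= g_1(y)g_2(y)\,\big(F_1 F_2 - F_2 F_1\big)(x),
\end{align*}
using the linearity of $F_1$ and $F_2$ to pull the scalars $g_2(y)$ and $g_1(y)$ out. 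Here I would use that, since $B_1$ and $B_2$ are right biderivations of the form \Cref{eq:right_bider_scalarXder}, \Cref{thm:F_is_a_derivation} guarantees $F_1, F_2 \in \Der(\mg)$.

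The next step is to identify the two factors. Set $g := g_1 \cdot g_2$ (the pointwise product of the two scalar maps, which is again a map $V \to \mathbb{F}$, not necessarily linear — consistent with the standing generality on $g$), and set $F := [F_1, F_2] = F_1 F_2 - F_2 F_1$. Then the computation above reads $(B_1 \rhd B_2)(x,y) = g(y)F(x)$, which is exactly the form of \Cref{eq:right_bider_scalarXder}. To be fully rigorous I would also note that $F$ is a derivation of $\mg$, since $\Der(\mg)$ is closed under the commutator bracket; combined with \Cref{thm:F_is_a_derivation} this re-confirms that $B_1 \rhd B_2 \in \BiDer_r(\mg)$, consistent with \Cref{prop:algclosure_rhd}, though for the statement as phrased it suffices to exhibit the product form.

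I do not anticipate a serious obstacle here: the argument is a direct substitution, and the only subtlety is bookkeeping — making sure the scalars $g_i(y)$ are genuinely scalars (so they commute with everything and can be extracted through the linear maps $F_i$) and that the resulting $g = g_1 g_2$ need not be linear, which is fine given the hypotheses allow $g$ to be an arbitrary map. If one wanted the statement in terms of the associated maps $f_i$ rather than the pair $(g,F)$, one could alternatively phrase the conclusion via the preceding proposition, but the cleanest route is the direct factorization above.
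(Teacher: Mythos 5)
Your proof is correct and follows essentially the same route as the paper: substitute the factored form into \Cref{eq:Liebracket_right}, pull the scalars $g_i(y)$ through the linear maps $F_i$, and identify $g = g_1 g_2$ and $F = [F_1, F_2] \in \Der(\mg)$. Your additional remarks (that $g$ need not be linear and that $\Der(\mg)$ is closed under the commutator) are exactly the observations the paper makes in identifying the resulting pair $(g, F)$.
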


\begin{proof}
    Let $g_1,g_2\colon\mg\to\R$, and let $F_1,F_2\in\Der(\mg)$ such that $B_i(x,y)=g_i(y)F_i(x)$, $i=1,2$. By \Cref{thm:F_is_a_derivation} follows that $B_i\in\BiDer_r(\mg)$, $i=1,2$. Thus, by computing the Lie bracket on $\BiDer_r(\mg)$ defined by \Cref{eq:Liebracket_right}, we obatain, for every $x,y\in \mg$
    \begin{align*}
        B_1\rhd B_2(x,y)&= B_1(g_2(y)F_2(x),y)-B_2(g_1(y)F_1(x),y)\\
        &=g_1(y)g_2(y)\left(F_1F_2(x)-F_2F_1(x)\right)\\
        &=g(y)F(x),
    \end{align*}
    where \( g = g_1 g_2 \) is the pointwise product of the scalar maps \( g_1 \) and \( g_2 \), and hence \( g \colon \mathfrak{g} \to \mathbb{F} \) is again a scalar map, and $F=[F_1,F_2]\in\Der(\mg)$.
    \end{proof}

To proceed with the integration of a right biderivation of $\mg$, we begin by recalling some useful and well-known results. 

Let $G$ and $H$ be matrix Lie groups, with Lie algebras $\mg$ and $\mathfrak{h}$, respectively. If $\Phi\colon G\to H$ is a Lie group homomorphism, then there exists a unique $\R$-linear map $\phi\colon\mg\to\mathfrak{h}$ such that
\begin{equation*}
  \Phi(e^x)=e^{\phi(x)},    
\end{equation*}
for all $x\in \mg$. Moreover, it holds (by Theorem 3.28 in \cite{hall2015lie})
\begin{equation*}
    \phi(x)=\left.\frac{d}{dt}\right|_{t=0}\Phi(e^{tx}).
\end{equation*}
Since this also holds for general Lie groups, which are not necessarily matrix groups, provided that we replace the matrix exponential map with the more general $\exp\colon \mathfrak{g} \to G$, we can therefore state the following:
\begin{equation*}
    \phi(x)=\left.\frac{d}{dt}\right|_{t=0}\Phi(\exp(tx)).
\end{equation*}

Let now $G$ be a Lie group with identity $e$, and let $\mg=\Lie(G)$ be its Lie algebra. If $\phi\colon G\to G$ is an automorphism of $G$ (i.e. a differentiable isomorphism of groups), then $d\phi_e\colon\mg\to\mg$ is a homomorphism of Lie algebras (see Theorem 8.44 in \cite{lee2012introduction}). In other words, we considered here the pushforward of $\phi$ in $e\in G$, and then $d\phi_e\colon T_eG\to T_{\phi(e)}G$, where $\phi(e)=e$ because $\phi$ is a (Lie) group homomorphism. Here we recall that
\begin{equation*}
    d\phi_e(x)=\left.\frac{d}{dt}\right|_{t=0}\phi(\exp(tx)),
\end{equation*}
for every $x\in\mg=T_eG$.
If $G$ is a simply connected Lie group, then the group of automorphisms of $G$ is isomorphic to a Lie group whose Lie algebra is the Lie algebra of derivations of $\mg$ (Proposition 2, Chapter IV, \cite{Chevalley1946}). Then, via this isomorphism, we can identify the Lie algebra $\Lie(\Aut(G))$ with $\Der(\mg)$. Therefore, based on what has been discussed so far, we consider a smooth curve of automorphisms $\phi_s$, with $s \in \mathbb{R}$, such that $\phi_0 = \mathrm{id}_G$. Then, it is straightforward and well-known that 
\begin{equation*}
    D=\left.\frac{d}{ds}\right|_{s=0}(d\phi_s)_e
\end{equation*} 
is a derivation of $\mg$. 

\begin{thm}
    Let $G$ be a simply connected Lie group with identity element $e$, and with Lie algebra $\Lie(G)=\mg$. For every right biderivation as in \Cref{eq:right_bider_scalarXder}, there exists a smooth curve of automorphisms of $G$, namely $\phi_s$, such that
    \begin{equation}
        B(x,y)=\left.\dfrac{d}{ds}\right|_{s=0}d(g(y)\phi_s)_e(x),
    \end{equation}
    for all $x,y\in\mg$.
\end{thm}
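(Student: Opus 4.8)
The plan is to reduce the statement, via \Cref{thm:F_is_a_derivation}, to the integration of a derivation. By that theorem a right biderivation of the form \Cref{eq:right_bider_scalarXder} is precisely $B(x,y)=g(y)F(x)$ with $F\in\Der(\mg)$, and the scalar map $g$ plays no role in the construction: since each $(d\phi_s)_e\colon\mg\to\mg$ is $\R$-linear and $g(y)\in\R$ is constant in $s$, the expression $d(g(y)\phi_s)_e$ is to be read as the scalar multiple $g(y)\,(d\phi_s)_e$, so that $d(g(y)\phi_s)_e(x)=g(y)(d\phi_s)_e(x)$. Thus it suffices to produce a smooth curve $\phi_s$ of automorphisms of $G$ with $\phi_0=\id_G$ and $\left.\frac{d}{ds}\right|_{s=0}(d\phi_s)_e=F$.

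First I would observe that the exponential $e^{sF}=\sum_{k\ge 0}\frac{s^k}{k!}F^k$ is an automorphism of the Lie algebra $\mg$ for every $s\in\R$: this is the classical fact that the exponential of a derivation is an algebra automorphism, obtained by combining the Leibniz rule with the binomial identity. Hence $s\mapsto e^{sF}$ is a smooth one-parameter subgroup of $\Aut(\mg)$, equal to $\id_\mg$ at $s=0$ with derivative $F$ there. Next, using that $G$ is simply connected, I would invoke the isomorphism of Lie groups $\Aut(G)\xrightarrow{\sim}\Aut(\mg)$, $\phi\mapsto d\phi_e$ — the group-level counterpart of the identification $\Lie(\Aut(G))\cong\Der(\mg)$ recalled above, which rests on the fact that every Lie algebra automorphism of $\mg$ integrates uniquely to a Lie group automorphism of the simply connected $G$. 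Define $\phi_s\in\Aut(G)$ to be the unique automorphism with $d(\phi_s)_e=e^{sF}$; since the above correspondence is an isomorphism of Lie groups, $s\mapsto\phi_s$ is smooth, $\phi_0=\id_G$, and $\left.\frac{d}{ds}\right|_{s=0}d(\phi_s)_e=\left.\frac{d}{ds}\right|_{s=0}e^{sF}=F$.

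Putting the pieces together, for all $x,y\in\mg$,
\[
\left.\frac{d}{ds}\right|_{s=0}d(g(y)\phi_s)_e(x)=g(y)\left.\frac{d}{ds}\right|_{s=0}d(\phi_s)_e(x)=g(y)F(x)=B(x,y),
\]
as claimed. The main obstacle is not the computation but making this reduction airtight: one must pin down the meaning of $d(g(y)\phi_s)_e$ as scalar multiplication on the tangent map, and correctly assemble the three standard ingredients — that $\exp$ of a derivation is an automorphism, that automorphisms of $\mg$ lift to automorphisms of simply connected $G$, and that this lift is an isomorphism of Lie groups, so that $\phi_s$ is genuinely a smooth curve whose derivative at $s=0$ recovers $F$.
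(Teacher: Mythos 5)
Your proof is correct and follows essentially the same route as the paper: reduce to producing a smooth curve $\phi_s$ of automorphisms of $G$ with $\phi_0=\id_G$ and $\left.\frac{d}{ds}\right|_{s=0}(d\phi_s)_e=F$ (using \Cref{thm:F_is_a_derivation} to know $F\in\Der(\mg)$), then factor the scalar $g(y)$ out of the derivative by linearity of the pushforward. The only difference is that you construct $\phi_s$ explicitly as the lift of the one-parameter group $e^{sF}\in\Aut(\mg)$ through the isomorphism $\Aut(G)\cong\Aut(\mg)$ for simply connected $G$, whereas the paper simply invokes the preceding identification $\Lie(\Aut(G))\cong\Der(\mg)$ to assert the existence of such a curve; your version is slightly more self-contained but substantively identical.
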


\begin{proof}
    Let $B(x,y)=g(y)F(x)$, with $g\colon\Lie(G)\to\R$, and $F\in\Der(\mg)$. By considerations done before this theorem, there exists a smooth curve of automorphisms of $G$, namely $\phi_s$, such that $\phi_0=\id_G$ and
    \begin{equation*}
        F=\left.\frac{d}{ds}\right|_{s=0}(d\phi_s)_e.
    \end{equation*}
  Hence, for every $x,y\in\mg$
  \begin{align*}
      B(x,y)&= g(y)F(x)\\
            &= g(y)\left.\frac{d}{ds}\right|_{s=0}(d\phi_s)_e(x)\\
            &= \left.\frac{d}{ds}\right|_{s=0}(g(y)d\phi_s)_e(x)\\
            &= \left.\frac{d}{ds}\right|_{s=0}d(g(y)\phi_s)_e(x),
  \end{align*}
  where the third equality holds because $g(y) \in \mathbb{R}$ for any $y \in \operatorname{Lie}(G)$, and the fourth equality holds since the pushforward $d\phi_s$ of $\phi$ at the identity element $e$ is linear.

\end{proof}

\section*{Acknowledgments}

Alfonso Di Bartolo was supported by the University of Palermo (FFR2024, UNIPA BsD). Gianmarco La Rosa was supported by "Sustainability Decision Framework (SDF)" Research Project --  CUP B79J23000540005 -- Grant Assignment Decree No. 5486 adopted on 2023-08-04. The authors were also supported by “National Group for Algebraic and Geometric Structures, and Their Applications” (GNSAGA — INdAM).

\section*{ORCID}

Alfonso Di Bartolo \orcidlink{0000-0001-5619-2644} \href{https://orcid.org/0000-0001-5619-2644}{0000-0001-5619-2644}\\
Gianmarco La Rosa \orcidlink{0000-0003-1047-5993} \href{https://orcid.org/0000-0003-1047-5993}{0000-0003-1047-5993}


\printbibliography
\end{document}